\documentclass[hidelinks,onefignum,onetabnum]{siamart251216}

\usepackage{placeins}
\usepackage{microtype}
\usepackage{subfigure}
\usepackage{booktabs}
\usepackage{amsmath}
\usepackage{amssymb}
\usepackage{amsfonts}
\usepackage{mathtools}
\usepackage{algorithmic}
\usepackage{multirow}
\usepackage[table]{xcolor}
\usepackage{textcomp}
\usepackage[normalem]{ulem}
\usepackage[textsize=tiny]{todonotes}

\newsiamremark{remark}{Remark}
\newsiamremark{assumption}{Assumption}

\headers{PG-CFM with ERFM for Robust PDE Inverse Problems}{Y. Chen, S. Lu, W. Guo, and X. Zhong}

\title{Physics-Guided Conditional Flow Matching with Energy Regularization for Robust PDE Inverse Problems
 \thanks{Submitted to the editors May 2026. Code is available at \url{https://github.com/mosdf/PG-CFM-ERFM}.}
\funding{xxx.}}

\author{
Yongsheng Chen\thanks{School of Mathematical Sciences, Zhejiang University, Hangzhou 310027, China. (\email{22035024@zju.edu.cn}).\,\textsuperscript{*}Equal contribution.}
\and Shuo Lu\thanks{School of Mathematical Sciences, Zhejiang University, Hangzhou 310027, China. (\email{12435047@zju.edu.cn}).\,\textsuperscript{*}Equal contribution.}
\and Wei Guo\thanks{Department of Mathematics and Statistics, Texas Tech University, Lubbock, TX 70409, USA. (\email{weimath.guo@ttu.edu}).}
\and Xinghui Zhong\thanks{School of Mathematical Sciences, Zhejiang University, Hangzhou 310027, China. (\email{zhongxh@zju.edu.cn}).}
}

\ifpdf
\hypersetup{
  pdftitle={PG-CFM with ERFM for Robust PDE Inverse Problems},
  pdfauthor={Y. Chen, et al.}
}
\fi

\providecommand{\citep}[1]{\cite{#1}}
\providecommand{\citet}[1]{\cite{#1}}

\renewcommand{\mid}{\vert}

\newcommand{\best}[1]{\cellcolor{black!10}\textbf{#1}}

\newcommand{\figpanel}[4]{\begin{minipage}[t]{#1}
\centering
\includegraphics[width=\linewidth,height=#2,keepaspectratio]{#3}\\[-1mm]
{\footnotesize #4}
\end{minipage}
}

\renewcommand{\rm}[1]{\textcolor{green}{\sout{#1}}}

\begin{document}

\maketitle

% \begin{abstract}
% We study partial differential equation (PDE) inverse problems from sparse, noisy, and outlier-corrupted pointwise observations, where the goal is to recover unknown coefficient and solution fields. Existing physics-informed solvers and flow- or diffusion-based generative approaches are often ill-suited in this regime, as they typically treat observations indiscriminately, lacking a principled mechanism to reconcile physical laws with contaminated data. We introduce Physics-Guided Conditional Flow Matching (PG-CFM), a mesh-free conditional flow framework that enforces strong-form PDE consistency through residual regularization along flow trajectories, complemented by global collocation penalties. To robustly mitigate outliers, we propose Energy-Regularized Flow Matching (ERFM), a lightweight fine-tuning procedure that assigns physics–data energies to observations and reweights the flow-matching objective to downweight high-energy, PDE-inconsistent samples. We prove that ERFM is equivalent to conditional flow matching on a teacher-induced reweighted empirical distribution, providing a principled interpretation as distributional refocusing. Across multiple benchmarks with heterogeneous corruption and heavy-tailed noise, PG-CFM-ERFM consistently improves coefficient recovery over robust PINN variants and  generative baselines. Code is available at \url{}.
% \end{abstract}

%This regime
\begin{abstract}
We consider partial differential equation (PDE) inverse problems from sparse, noisy, and corrupted observations, with the aim of recovering unknown coefficient fields and associated state variables in a mesh-free setting. Under such sparse and corrupted observations, standard physics-informed and generative approaches typically treat all samples indiscriminately and therefore lack a principled mechanism for reconciling physical laws with contaminated data. We address this difficulty with a two-stage flow-matching framework. In the first stage, we develop physics-guided conditional flow matching (PG-CFM), which incorporates strong-form PDE information through residual regularization along the generative trajectories together with intermittent global collocation constraints. In the second stage, we introduce energy-regularized flow matching (ERFM), which fine-tunes the Stage-1 model by assigning each observation a physics--data energy score from a frozen teacher and reweighting the flow-matching objective to reduce the influence of high-energy, PDE-inconsistent samples. We show that the resulting Stage-2 objective is equivalent to flow matching under a teacher-induced reweighted data distribution, which gives a population-level interpretation of the robustness mechanism. Numerical experiments on several inverse benchmarks, including Poisson and Navier--Stokes problems, show that the proposed framework yields more accurate coefficient recovery than robust PINN variants and competing generative baselines.
\end{abstract}

\begin{keywords}
PDE inverse problems, conditional flow matching, physics-informed learning, robust learning
\end{keywords}

\begin{MSCcodes}
65M32, 65N21, 68T07
\end{MSCcodes}

\section{Introduction}
\label{sec:intro}

Partial differential equation (PDE) inverse problems seek to recover unknown coefficient fields, source terms, or latent state variables from observations  and arise in numerous applications throughout science and engineering, including subsurface flow, fluid reconstruction, among others. In practice, observations are inevitably sparse, irregularly distributed, and noisy, making the resulting inverse problem severely ill-posed. Classical variational and adjoint methods provide a solid foundation for such problems \citep{isakov2017inverse,arridge2019solving,benning2018modern}, but their performance can drop significantly under heteroscedastic noise.

A large body of recent work addresses PDE inverse problems through physics-informed optimization, most notably physics-informed neural networks (PINNs) and their variants \citep{raissi2019physics,karniadakis2021physics}. These methods combine data-misfit terms and PDE residual penalties into a single objective and yields a flexible mesh-free formulation. However, when observations are sparse or noisy, the data-fit and PDE-residual terms may induce imbalanced gradients, leading to inaccurate predictions \citep{wang2021understanding}. Robust PINN variants improve stability through adaptive weighting \citep{xiang2022self}, data-guided pretraining \citep{zhou2024data}, or Bayesian formulations \citep{yang2021b}. CoPINN dynamically evaluates the difficulty of each sample and optimizes the sampling regions from easy to hard \citep{duan2025copinn}. These approaches improve optimization robustness in different ways, but still act primarily through local loss shaping, training schedules, or posterior inference rather than through a mechanism that identifies observations that are globally inconsistent with the governing PDE.

Generative formulations provide an alternative by learning distributions over latent fields through diffusion, score-based, normalizing-flow, or flow-matching dynamics \citep{ho2020denoising,dhariwal2021diffusion,song2021scorebased,chen2018neural,lipman2023flow}. In the PDE setting, many recent approaches operate in data-rich offline regimes, where large collections of clean simulations are used to pretrain surrogates or priors \citep{li2025videopde,zhou2025text2pde,ye2025pdeformer}. Physical constraints may then be added at inference time, for example through diffusion posterior sampling \citep{huang2024diffusionpde,chung2023diffusion,yao2025guided}, or incorporated more directly into training through physics-aware generative objectives \citep{bastek2025physics,tauberschmidt2025physics,yuan2025pirf,baldan2025flow}. Functional flow-matching formulations further expand the admissible constraint mechanisms \citep{cheng2025gradientfree,utkarsh2025physics,ben2024d}. These developments are promising, but they do not naturally fit the single-instance inverse problem we study here: many assume relatively mild noise, rely on a fixed grid, or aim to balance data fit and physics, rather than down-weighting corrupted measurements when the two goals are in clear conflict.

A related issue appears more broadly in learning with noisy data. Robustness is often improved through robust losses \citep{ghosh2017robust,zhang2018generalized}, sample selection or co-teaching \citep{xia2022sample,song2022learning}, meta-reweighting \citep{ren2018learning,zhang2020distilling}, influence-based criteria \citep{basu2021influence,liu2025rethinking}, or machine-unlearning mechanisms that reduce the footprint of harmful samples \citep{pruthi2020estimating,liu2025threats,simone2025continual}. Such strategies are effective in many statistical settings, but they are largely physics-agnostic: a sample that appears statistically reasonable may still violate the governing PDE, while a large residual may indicate true model mismatch rather than corrupted data. This motivates a robustness mechanism guided by data fit and physical consistency.

We consider this problem in the online inverse setting, where one solves a single PDE inverse problem directly from irregular observations without relying on large offline pretraining datasets. Our formulation is built on conditional flow matching and consists of two stages. The first stage, \emph{physics-guided conditional flow matching} (PG-CFM), augments conditional flow matching with strong-form PDE regularization along the generative trajectories together with periodic global collocation penalties. This encourages the transport aligned with the PDE solution manifold throughout the flow, not just at the final reconstruction. The second stage, \emph{energy-regularized flow matching} (ERFM), uses the Stage-1 solution as a frozen teacher, assigns each observation a physics--data energy score, and reweights the flow-matching objective to reduce the influence of high-energy, PDE-inconsistent samples.

The main theoretical result shows that the ERFM objective is equivalent to conditional flow matching under a teacher-induced reweighted data distribution, which gives a population-level interpretation of the Stage-2 robustness mechanism. In the numerical experiments, we evaluate the method on several inverse benchmarks, including Poisson and Navier--Stokes problems, and obtain improved coefficient recovery and field reconstruction under heterogeneous corruption and heavy-tailed noise.

The remainder of the paper is organized as follows. 
We formulate the PDE inverse problem and review the background of conditional flow matching in Section~\ref{sec:problem-background}.
Section~\ref{sec:pgcfm-framework} introduces the proposed physics-guided conditional flow-matching framework.
Section~\ref{sec:erfm} introduces the energy-regularized refinement stage.
We report numerical experiments in Section~\ref{sec:experiments}, including benchmark comparisons, ablation studies, robustness tests, sensitivity analyses, and computational diagnostics. Finally, conclusion and future work are discussed in Section~\ref{sec:conclusion}.

\section{Problem Formulation and Flow-Matching Background}
\label{sec:problem-background}

\subsection{PDE inverse problems}
\label{sec:background-pde}

We consider PDE inverse problems on a physical domain
$\Omega \subset \mathbb{R}^{d_\xi}$, where a coordinate
$\boldsymbol{\xi}\in\Omega$ collects all relevant independent variables. For time-dependent problems, one may take
$\Omega=\Omega_{\mathrm{space}}\times[0,T]$, although we do not distinguish these coordinates explicitly in the notation.

The goal is to recover unknown state and coefficient fields
$u^\star:\Omega \to \mathbb{R}^{d_u}$ and
$a^\star:\Omega \to \mathbb{R}^{d_a}$ that satisfy the governing PDE together with the associated boundary or initial conditions,
\begin{equation}
\mathcal{F}\bigl(u^\star,a^\star;\boldsymbol{\xi}\bigr)=\mathbf{0},
\qquad \boldsymbol{\xi}\in\Omega,
\label{eq:general-pde}
\end{equation}
and
\begin{equation}
\mathcal{B}\bigl(u^\star,a^\star;\boldsymbol{\xi}\bigr)=\mathbf{0},
\qquad \boldsymbol{\xi}\in\partial\Omega.
\label{eq:general-bc}
\end{equation}
Here $\mathcal{F}$ is a differential operator involving $u^\star$, $a^\star$, and their derivatives, while $\mathcal{B}$ represents the prescribed boundary and initial constraints.

In the inverse setting, the field $u^\star$ is only partially observed. Instead, we are given $N$ noisy, potentially corrupted measurements
\[
y_i = u^\star(\boldsymbol{\xi}_i) + \varepsilon_i,
\quad \boldsymbol{\xi}_i\in\Omega,\quad i=1,\dots,N,
\]
collected at scattered locations $\boldsymbol{\xi}_i$, where $\varepsilon_i$ may be heteroscedastic, heavy-tailed, or include outliers. This setting calls for a mesh-free reconstruction approach that can work directly with irregular observations. For notational convenience, we write
\[
(\boldsymbol{\xi},y)\sim p_{\mathrm{data}}
\]
for the data-generating distribution and approximate expectations under $p_{\mathrm{data}}$ by empirical averages over $\{(\boldsymbol{\xi}_i,y_i)\}_{i=1}^N$.

Our objective is to reconstruct both $u^\star$ and $a^\star$ from these noisy and corrupted observations while enforcing consistency with \eqref{eq:general-pde}--\eqref{eq:general-bc}. Throughout the paper, we focus on the single-instance setting in which one solves a given inverse problem directly from its irregular measurements, rather than learning from a large offline dataset of fully observed PDE solutions.

\subsection{Conditional flow-matching}
\label{sec:problem-cfm}
We briefly review the conditional flow-matching (CFM) formulation used below; a general treatment can be found in \citet{lipman2023flow}. Let $\{p_t\}_{t\in[0,1]}$ be a probability path on $\mathbb{R}^{d_u}$ from a simple base distribution $p_0$ to a target distribution $p_1$. The path is governed with a time-dependent velocity field
\[
w_t:\mathbb{R}^{d_u}\to\mathbb{R}^{d_u}
\]
through the continuity equation
\begin{equation}
\partial_t p_t(u) + \nabla_u\!\cdot\!\bigl(p_t(u)\,w_t(u)\bigr)=0,
\qquad u\in\mathbb{R}^{d_u}.
\label{eq:cfm-continuity}
\end{equation}

A neural approximation $v_t(u;\theta)$ to the marginal velocity $w_t(u)$ can be learned by minimizing
\begin{equation}
\mathcal{L}_{\mathrm{FM}}(\theta)
:=
\mathbb{E}_{t\sim\mathrm{Unif}[0,1],\,u\sim p_t}
\bigl[\|v_t(u;\theta)-w_t(u)\|_2^2\bigr].
\label{eq:fm-loss-marginal}
\end{equation}
However, the marginal velocity $w_t$ is generally intractable. CFM addresses this difficulty by introducing a conditioning variable $z\sim q(z)$ and a family of conditional paths $\{p_t(\cdot\mid z)\}_{t\in[0,1]}$ such that
\begin{equation}
p_t(u)=\int p_t(u\mid z)\,q(z)\,dz,
\label{eq:cfm-marginalization}
\end{equation}
and each conditional path satisfies
\begin{equation}
\partial_t p_t(u\mid z)
+
\nabla_u\!\cdot\!\bigl(p_t(u\mid z)\,w_t(u\mid z)\bigr)=0.
\label{eq:cfm-conditional-path}
\end{equation}
When the conditional velocity $w_t(u\mid z)$ is tractable, $v_t$ can instead be trained by
\begin{equation}
\mathcal{L}_{\mathrm{CFM}}(\theta)
:=
\mathbb{E}_{\substack{t\sim\mathrm{Unif}[0,1]\\ z\sim q(z),\,u\sim p_t(\cdot\mid z)}}
\Bigl[
\|v_t(u;\theta)-w_t(u\mid z)\|_2^2
\Bigr].
\label{eq:cfm-loss}
\end{equation}
In practice, we sample $t$ from $\mathrm{Unif}[\varepsilon_{\mathrm{fm}},1-\varepsilon_{\mathrm{fm}}]$ with a small $\varepsilon_{\mathrm{fm}}>0$ to avoid numerical issues near the endpoints. Under mild regularity assumptions, the conditional objective \eqref{eq:cfm-loss} shares the same minimizers as marginal flow matching and its gradients differ only by a constant factor \citep{lipman2023flow}. CFM therefore allows one to train continuous transport models without explicitly simulating the marginal velocity.

\subsection{Observation-conditioned path}
\label{sec:cfm-obs}

To apply CFM to the inverse problems considered here, we associate each observation pair $(\boldsymbol{\xi}_i,y_i)$ with a terminal target in state space. We model the flow with a neural vector field
\[
v_\theta:[0,1]\times\mathbb{R}^{d_u}\times\Omega\times\mathbb{R}^{d_a}\to\mathbb{R}^{d_u},
\]
which takes as inputs the flow time $t$, the current state $u$, the coordinate $\boldsymbol{\xi}$, and the coefficient value $a_\Phi(\boldsymbol{\xi})$. When problem-dependent inputs such as forcing terms are used, they are included as additional conditioning variables and are omitted from the general notation for readability. The unknown coefficient field is approximated by a neural representation
\[
a_\Phi:\Omega\to\mathbb{R}^{d_a},
\]
with trainable parameters $\Phi$.

For a fixed coordinate $\boldsymbol{\xi}\in\Omega$, the induced transport satisfies
\begin{equation}
\frac{d s(t,\boldsymbol{\xi})}{dt}
=
v_\theta\bigl(t,s(t,\boldsymbol{\xi}),\boldsymbol{\xi},a_\Phi(\boldsymbol{\xi})\bigr),
\qquad
s(0,\boldsymbol{\xi})\sim p_0,
\label{eq:conditional-flow-pde}
\end{equation}
where $p_0$ is a standard Gaussian on $\mathbb{R}^{d_u}$. For each observed pair $(\boldsymbol{\xi}_i,y_i)$, we sample $\epsilon\sim p_0$ and $t\sim \mathrm{Unif}[0,1]$, and define the linear conditional path
\begin{equation}
\bar{s}(t,\boldsymbol{\xi}_i)
=
(1-t)\epsilon + t y_i,
\qquad t\in[0,1].
\label{eq:linear-bridge}
\end{equation}
For this standard CFM linear bridge, the conditional velocity is
\begin{equation}
w_t\bigl(\bar{s}(t,\boldsymbol{\xi}_i)\mid \boldsymbol{\xi}_i,y_i,\epsilon\bigr)=y_i-\epsilon.
\label{eq:conditional-velocity}
\end{equation}
The resulting observation-conditioned CFM objective is
\begin{equation}
\mathcal{L}_{\mathrm{CFM}}(\theta,\Phi)
:=
\mathbb{E}_{\substack{t\sim\mathrm{Unif}[0,1]\\ (\boldsymbol{\xi}_i,y_i)\sim p_{\mathrm{data}}\\ \epsilon\sim p_0}}
\Bigl[
\bigl\|
v_\theta\bigl(t,\bar{s}(t,\boldsymbol{\xi}_i),\boldsymbol{\xi}_i,a_\Phi(\boldsymbol{\xi}_i)\bigr)
-
\bigl(y_i-\epsilon\bigr)
\bigr\|_2^2
\Bigr].
\label{eq:fm-pde-inverse}
\end{equation}

By itself, \eqref{eq:fm-pde-inverse} learns a conditional transport model from noisy observations, but it  does not use the governing PDE or distinguish physically consistent measurements from corrupted ones. The next section introduces a physics-guided extension of \eqref{eq:fm-pde-inverse} that regularizes the transport dynamics with strong-form PDE information.
\section{Physics-Guided Conditional Flow Matching}
\label{sec:pgcfm-framework}

The CFM objective \eqref{eq:fm-pde-inverse} learns a transport model from noisy observations, but does not enforce the governing PDE. Consequently, fitting the bridge velocities alone does not ensure that the learned trajectories remain physically consistent, especially when the observations are sparse or corrupted. We therefore augment CFM with PDE-based regularization and obtain the first-stage model, which we call \emph{physics-guided conditional flow matching} (PG-CFM).

Figure~\ref{fig:framework} summarizes the complete two-stage procedure. This section develops the Stage-1 PG-CFM model, where observation-conditioned flow matching is combined with bridge-local and global PDE residual penalties. The Stage-2 energy-based refinement, which uses the frozen Stage-1 solution to reweight the flow-matching objective, is introduced in Section~\ref{sec:erfm}.

\begin{figure}[t]
    \centering
    \includegraphics[width=\linewidth]{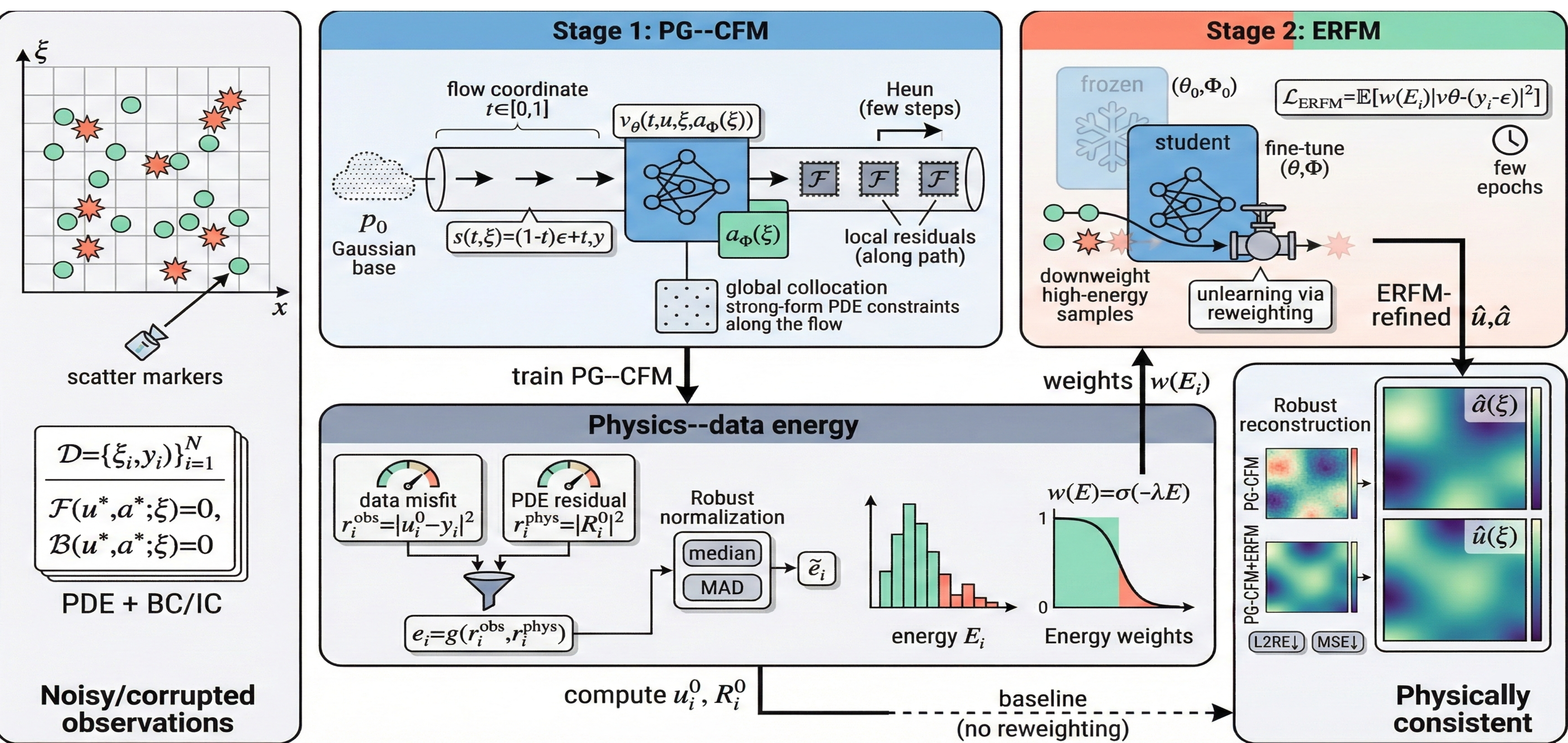}
    \caption{Overview of the proposed two-stage framework. In Stage~1, PG-CFM learns a physics-guided conditional transport model by combining observation-conditioned flow matching with local and global PDE regularization. In Stage~2, the frozen Stage-1 solution induces physics--data energy scores, which reweight the observation-conditioned flow-matching loss for ERFM fine-tuning.}
    \label{fig:framework}
\end{figure}

\subsection{Physics-guided regularization}
\label{sec:pg-cfm}

To incorporate physical structure, we add residual penalties built from the operator in \eqref{eq:general-pde}. For a candidate state field $\hat u$ and coefficient field $a$, define
\begin{equation}
\mathcal{R}[\hat u,a](\boldsymbol{\xi})
:=
\mathcal{F}(\hat u,a;\boldsymbol{\xi}),
\label{eq:pde-residual-operator}
\end{equation}
where the derivatives in $\mathcal{F}$ are evaluated by automatic differentiation \citep{baydin2018automatic}.

We use two residual penalties. The first is a local bridge-conditioned loss evaluated along trajectories attached to the noisy observations. The second is a global collocation loss imposed across the domain. The local term constrains the transport near the observed data while the global term promotes consistency in regions.

\subsubsection{Local bridge-conditioned physics loss}
\label{sec:local-physics-loss}
The local bridge-conditioned physics loss is designed to enforce PDE consistency on terminal predictions generated from bridge states. Let $\Gamma_{t\to 1}^{(K_1)}$ denote the numerical flow map obtained by applying $K_1$ fixed-step Heun updates to \eqref{eq:conditional-flow-pde} from flow time $t$ to $1$. Starting from the bridge state $\bar{s}(t,\boldsymbol{\xi}_i)$, we define
\begin{equation}
\tilde{u}_{\theta,\Phi}(\boldsymbol{\xi}_i)
=
\Gamma_{t\to 1}^{(K_1)}
\bigl(
\bar{s}(t,\boldsymbol{\xi}_i),
\boldsymbol{\xi}_i,
a_\Phi(\boldsymbol{\xi}_i);\theta
\bigr).
\label{eq:local-flow-to-field}
\end{equation}
The corresponding local residual is
\begin{equation}
\mathcal{R}_{\mathrm{loc}}(\boldsymbol{\xi}_i,t;\theta,\Phi)
=
\mathcal{R}
[
\tilde{u}_{\theta,\Phi},
a_\Phi
]
(\boldsymbol{\xi}_i),
\label{eq:local-residual}
\end{equation}
and the local physics loss is
\begin{equation}
\mathcal{L}_{\mathrm{PDE,loc}}(\theta,\Phi)
=
\mathbb{E}_{
\substack{
t\sim\mathrm{Unif}[0,1],\,
(\boldsymbol{\xi}_i,y_i)\sim p_{\mathrm{data}},\\
\epsilon\sim p_0
}}
\left[
\left\|
\mathcal{R}_{\mathrm{loc}}(\boldsymbol{\xi}_i,t;\theta,\Phi)
\right\|_2^2
\right].
\label{eq:local-pde-loss}
\end{equation}

Here, ``local'' means that the PDE residual is evaluated on a terminal prediction obtained from an observation-conditioned intermediate bridge state. Specifically, for each sampled observation $(\boldsymbol{\xi}_i,y_i)$ and flow time $t$, the bridge state $\bar{s}(t,\boldsymbol{\xi}_i)$ is used as the initial condition for the learned vector field, which is then integrated from $t$ to $1$. This differs from generating a prediction by integrating a full trajectory from a base state $s_0$ over $[0,1]$ and reduces the mismatch between the bridge interpolation used for flow matching and the physical field on which the residual is evaluated.

\subsubsection{Global collocation physics loss}
\label{sec:global-physics-loss}

The local loss alone does not sufficiently constrain the recovered solution away from the observation set. We therefore sample collocation points $\boldsymbol{\xi}\sim p_{\mathrm{res}}$ over $\Omega$, where $p_{\mathrm{res}}$ denotes the collocation-point sampling distribution, and impose the PDE residual independently of the measurements. Let $\Gamma_{0\to 1}^{(K_2)}$ denote the numerical flow map obtained by applying $K_2$ fixed-step Heun updates over $[0,1]$. We define the deterministic global prediction
\begin{equation}
u^\dagger_{\theta,\Phi}(\boldsymbol{\xi})
=
\Gamma_{0\to 1}^{(K_2)}
\bigl(
u_{\mathrm{base}},
\boldsymbol{\xi},
a_\Phi(\boldsymbol{\xi});\theta
\bigr),
\qquad
u_{\mathrm{base}}=\mathbf{0}.
\label{eq:global-flow-to-field}
\end{equation}
The deterministic initialization in \eqref{eq:global-flow-to-field} gives a single differentiable field value at each collocation point, which is convenient for evaluating the global residual and reporting point predictions.

The corresponding global residual is
\begin{equation}
\mathcal{R}_{\mathrm{glob}}(\boldsymbol{\xi};\theta,\Phi)
=
\mathcal{R}
[
u^\dagger_{\theta,\Phi},
a_\Phi
]
(\boldsymbol{\xi}),
\label{eq:global-residual}
\end{equation}
and the global physics loss is
\begin{equation}
\mathcal{L}_{\mathrm{PDE}}(\theta,\Phi)
=
\mathbb{E}_{\boldsymbol{\xi}\sim p_{\mathrm{res}}}
\left[
\left\|
\mathcal{R}_{\mathrm{glob}}(\boldsymbol{\xi};\theta,\Phi)
\right\|_2^2
\right].
\label{eq:pde-loss-pgcfm}
\end{equation}
This term promotes domain-wide consistency and encourages the inferred state and coefficient fields to satisfy the governing PDE away from the measurement locations. In practice, it may be evaluated intermittently to reduce computational cost.

\subsection{Boundary and initial conditions}
\label{sec:pgcfm-bc}

When boundary or initial conditions are available, we impose them through additional squared penalties. Let $\partial\Omega_u\subseteq\Omega$ denote the portion of the spatiotemporal boundary on which the state is prescribed, and let $u_{\mathrm{bc}}(\boldsymbol{\xi})$ denote the corresponding boundary or initial data. We define
\begin{equation}
\mathcal{L}_{\mathrm{BC}}^u(\theta,\Phi)
=
\mathbb{E}_{\boldsymbol{\xi}\sim p_{\partial\Omega_u}}
\left[
\left\|
\mathcal{B}_u
[
u^\dagger_{\theta,\Phi}
]
(\boldsymbol{\xi})
-
u_{\mathrm{bc}}(\boldsymbol{\xi})
\right\|_2^2
\right],
\label{eq:bc-loss-u}
\end{equation}
where $\mathcal{B}_u$ is the boundary or initial-condition operator for the state. For time-dependent problems, the initial condition is imposed by including the temporal boundary, such as $\{t_{\mathrm{phys}}=0\}$, in $\partial\Omega_u$.

If the coefficient field is known on a subset $\partial\Omega_a\subseteq\Omega$, we also use
\begin{equation}
\mathcal{L}_{\mathrm{BC}}^a(\Phi)
=
\mathbb{E}_{\boldsymbol{\xi}\sim p_{\partial\Omega_a}}
\left[
\left\|
a_\Phi(\boldsymbol{\xi})
-
a_{\mathrm{bc}}(\boldsymbol{\xi})
\right\|_2^2
\right].
\label{eq:bc-loss-a}
\end{equation}
When such information is unavailable, the corresponding penalty weight is set to zero.

\subsection{PG-CFM objective and stage-1 reconstruction}
\label{sec:pgcfm-objective}
Combining the CFM loss with the local and global physics regularizers gives the PG-CFM objective
\begin{equation}
\begin{split}
\mathcal{L}_{\mathrm{PG\mbox{-}CFM}}(\theta,\Phi)
= {}& \mathcal{L}_{\mathrm{CFM}}(\theta,\Phi)
      + \lambda_{\mathrm{loc}} \mathcal{L}_{\mathrm{PDE,loc}}(\theta,\Phi) \\
    & + \lambda_{\mathrm{PDE}} \mathcal{L}_{\mathrm{PDE}}(\theta,\Phi)
      + \lambda_{\mathrm{BC}}^u \mathcal{L}_{\mathrm{BC}}^u(\theta,\Phi)
      + \lambda_{\mathrm{BC}}^a \mathcal{L}_{\mathrm{BC}}^a(\Phi).
\end{split}
\label{eq:pg-cfm-loss}
\end{equation}
Here $\lambda_{\mathrm{loc}}$ and $\lambda_{\mathrm{PDE}}$ weight the local and global physics penalties, while $\lambda_{\mathrm{BC}}^u$ and $\lambda_{\mathrm{BC}}^a$ weight the penalties for prescribed state and coefficient boundary or initial data.

In implementation, $\mathcal{L}_{\mathrm{CFM}}$ and $\mathcal{L}_{\mathrm{PDE,loc}}$ are evaluated on the same data minibatches, while $\mathcal{L}_{\mathrm{PDE}}$ is computed on collocation points sampled from $p_{\mathrm{res}}$. The coefficient network $a_\Phi$ and the conditional vector field $v_\theta$ are trained jointly by minimizing \eqref{eq:pg-cfm-loss}. Thus, $a_\Phi$ is updated both through its role in the flow model and its appearance in the PDE residual.

Stage~1 minimizes \eqref{eq:pg-cfm-loss} to obtain a physics-guided baseline model. The two residual terms play different roles: $\mathcal{L}_{\mathrm{PDE,loc}}$ constrains the transport near the observation-conditioned bridges, while $\mathcal{L}_{\mathrm{PDE}}$ promotes broader consistency across the domain. Using both is helpful in sparse inverse problems, where neither local observation information nor global collocation information is sufficient on its own.

After training, the model yields an inferred coefficient field $a_\Phi$ together with a transport field $v_\theta$ that defines the recovered state. At inference time, point predictions are obtained from the deterministic map $u^\dagger_{\theta,\Phi}$ in \eqref{eq:global-flow-to-field}. The pair $(u^\dagger_{\theta,\Phi},a_\Phi)$ is the Stage-1 inverse solution and also serves as the frozen teacher for the Stage-2 refinement in Section~\ref{sec:erfm}.

\begin{remark}[Numerical integration]
\label{rem:pgcfm-integration}
The flow maps $\Gamma_{t\to 1}^{(K_1)}$ and $\Gamma_{0\to 1}^{(K_2)}$ are computed using fixed-step Heun integration in the default experiments. The same integration scheme is used during training and evaluation. For the local physics term, the bridge-conditioned residual requires integration only from an intermediate flow time $t$ to the terminal time $1$, which is typically cheaper than repeatedly evolving a full trajectory over $[0,1]$.
\end{remark}

Algorithm~\ref{alg:pg-cfm} summarizes one PG-CFM training step, including the observation-conditioned flow-matching term, the local and global physics penalties, and the optional boundary or initial-condition terms.

\begin{algorithm}[t]
\caption{Stage~1: PG-CFM training step (one minibatch)}
\label{alg:pg-cfm}
\begin{algorithmic}[1]
  \STATE \textbf{Input:} minibatch $(\boldsymbol{\xi},y)$
  \STATE sample $t \sim \mathrm{Unif}[\varepsilon_{\mathrm{fm}},1-\varepsilon_{\mathrm{fm}}]$, $\epsilon \sim p_0$
  \STATE $s_t \leftarrow (1-t)\epsilon + ty$ \hfill \# linear bridge
  \STATE $u_{\mathrm{loc}} \leftarrow \Gamma_{t\to1}^{(K_1)}(s_t,\boldsymbol{\xi},a_\Phi(\boldsymbol{\xi});\theta)$
  \STATE $\mathcal{L}_{\mathrm{FM}} \leftarrow \mathrm{FM\_loss}(v_\theta,s_t,y,\epsilon)$ \hfill \# \eqref{eq:fm-pde-inverse}
  \STATE $\mathcal{L}_{\mathrm{PDE,loc}} \leftarrow \mathrm{LocalPDE\_loss}(u_{\mathrm{loc}},a_\Phi,\boldsymbol{\xi})$ \hfill \# \eqref{eq:local-pde-loss}
  \STATE $\mathcal{L}_{\mathrm{PDE}} \leftarrow \mathrm{GlobalPDE\_loss}(u^\dagger_{\theta,\Phi},a_\Phi)$ \hfill \# \eqref{eq:pde-loss-pgcfm}, evaluated occasionally
  \STATE $\mathcal{L}_{\mathrm{PG\mbox{-}CFM}} \leftarrow \mathcal{L}_{\mathrm{CFM}} + \lambda_{\mathrm{loc}}\mathcal{L}_{\mathrm{PDE,loc}} + \lambda_{\mathrm{PDE}}\mathcal{L}_{\mathrm{PDE}}$
  \STATE \textbf{if} boundary or initial penalties are used \textbf{then}
  \STATE \hspace{1.5em} $\mathcal{L}_{\mathrm{PG\mbox{-}CFM}} \leftarrow \mathcal{L}_{\mathrm{PG\mbox{-}CFM}} + \lambda_{\mathrm{BC}}^u\mathcal{L}_{\mathrm{BC}}^u + \lambda_{\mathrm{BC}}^a\mathcal{L}_{\mathrm{BC}}^a$
  \STATE update $(\theta,\Phi)$ by one optimizer step on $\mathcal{L}_{\mathrm{PG\mbox{-}CFM}}$
\end{algorithmic}
\end{algorithm}
\section{Energy-Regularized Flow Matching}
\label{sec:erfm}

The Stage-1 PG-CFM objective in \eqref{eq:pg-cfm-loss} weights all observations uniformly. In practice, however, some measurements may be corrupted and inconsistent with the governing PDE. To address this, we introduce \emph{energy-regularized flow matching} (ERFM), a second-stage refinement that assigns each observation a physics-data energy score from the frozen Stage-1 model and reweights the observation-conditioned flow-matching loss accordingly.

\subsection{Physics-data energy score}
\label{sec:erfm-energy}

Let $(u_{\theta^{0},\Phi^{0}}^\dagger,a_{\Phi^{0}})$ denote the Stage-1 solution obtained from PG-CFM. In Stage~2, this solution is kept fixed. For each observation $(\boldsymbol{\xi}_i,y_i)$, we evaluate the deterministic PG-CFM prediction and the corresponding PDE residual at the same location:
\begin{equation}
u_i^0
:=
u^\dagger_{\theta^0,\Phi^0}(\boldsymbol{\xi}_i),
\qquad
\mathcal{R}_i^0
:=
\mathcal{F}
\bigl(
u_i^0,
a_{\Phi^0}(\boldsymbol{\xi}_i);
\boldsymbol{\xi}_i
\bigr).
\end{equation}
We then define the observation misfit and physics residual magnitudes by
\begin{equation}
r_i^{\mathrm{obs}}
:=
\|u_i^0-y_i\|_2,
\qquad
r_i^{\mathrm{phys}}
:=
\|\mathcal{R}_i^0\|_2 .
\end{equation}
These two quantities are combined into a raw error score
\begin{equation}
e_i
:=
\omega_{\mathrm{obs}} r_i^{\mathrm{obs}}
+
\omega_{\mathrm{phys}} r_i^{\mathrm{phys}},
\label{eq:raw-error}
\end{equation}
where $\omega_{\mathrm{obs}},\omega_{\mathrm{phys}}\ge 0$ control the relative importance of data misfit and PDE violation.

We then normalize $\{e_i\}_{i=1}^N$ using the median and median absolute deviation to obtain a dimensionless robust score. Let
\[
m
:=
\operatorname{median}_{1\le i\le N} e_i,
\qquad
\operatorname{MAD}
:=
\operatorname{median}_{1\le i\le N} |e_i-m|.
\]
We set
\[
\mu := m+\kappa \operatorname{MAD},
\qquad
\tilde e_i
:=
\frac{e_i-\mu}{\operatorname{MAD}  +\delta},
\]
where $\kappa\ge 0$ controls the threshold at which samples are treated as atypical, and $\delta>0$ is a small stability constant. The final energy score is
\begin{equation}
E_i
:=
\rho(\tilde e_i),
\qquad
\rho(x)=\frac{x}{1+|x|}.
\label{eq:energy-score}
\end{equation}
Larger positive values of $E_i$ correspond to observations that are less consistent with the Stage-1 fit and the governing PDE.

The energy scores are computed once from the frozen Stage-1 parameters $(\theta^0,\Phi^0)$ and are then held fixed during Stage~2. Keeping them fixed decouples weight estimation from parameter updates and avoids a moving-target objective.

\subsection{Energy-weighted flow matching}
\label{sec:erfm-weighted-loss}

Stage~2 reweights the observation-conditioned flow-matching loss according to the energy scores. For each observation $(\boldsymbol{\xi}_i,y_i)$, we assign the weight
\[
w_i := w(E_i),
\]
where $w:\mathbb{R}\to(0,1]$ is decreasing. In implementation, we use
\begin{equation}
w(E)=\sigma(-\lambda E),
\qquad
\sigma(r)=\frac{1}{1+\exp(-r)},
\label{eq:weighting-function}
\end{equation}
with inverse-temperature parameter $\lambda>0$. Thus, observations with small energy receive weights close to one, while observations with large energy are downweighted.

Using these weights, we define the ERFM objective
\begin{equation}
\mathcal{L}_{\mathrm{ERFM}}(\theta,\Phi)
:=
\mathbb{E}_{\substack{
t\sim\mathrm{Unif}[0,1]\\
(\boldsymbol{\xi}_i,y_i)\sim p_{\mathrm{data}}\\
\epsilon\sim p_0
}}
\Bigl[
w_i
\bigl\|
v_\theta\bigl(
t,
\bar{s}(t,\boldsymbol{\xi}_i),
\boldsymbol{\xi}_i,
a_\Phi(\boldsymbol{\xi}_i)
\bigr)
-
(y_i-\epsilon)
\bigr\|_2^2
\Bigr].
\label{eq:er-fm-loss}
\end{equation}

This reweighting does not discard observations. Instead, it changes how strongly each sample contributes to the transport update, reducing the influence of observations assigned large energy by the frozen teacher.

\subsection{Reweighting interpretation}
\label{sec:erfm-theory}

The Stage-2 objective in \eqref{eq:er-fm-loss} can be interpreted as standard observation-conditioned flow matching under a reweighted data distribution. The energy weights do not change the bridge construction or the transport dynamics themselves; instead, they alter the effective sampling measure over the observations during fine-tuning.

To make this precise, define the normalized weights
\begin{equation}
\tilde w_i
:=
\frac{w_i}{\sum_{j=1}^N w_j},
\qquad
i=1,\dots,N,
\label{eq:normalized-erfm-weights}
\end{equation}
and let
\begin{equation}
\tilde p(i):=\tilde w_i
\label{eq:reweighted-erfm-distribution}
\end{equation}
denote the corresponding empirical distribution over the observation indices.

Using the same bridge construction in Section~\ref{sec:cfm-obs},
\[
\bar{s}(t,\boldsymbol{\xi}_i)
=
(1-t)\epsilon+t y_i,
\qquad
\epsilon\sim p_0,
\quad
t\sim\mathrm{Unif}[0,1],
\]
consider the standard observation-conditioned flow-matching objective obtained by sampling the observation index from the weighted empirical distribution $\tilde p$:
\begin{equation}
\widetilde{\mathcal{L}}_{\mathrm{CFM}}(\theta,\Phi)
:=
\mathbb{E}_{\substack{
t\sim\mathrm{Unif}[0,1]\\
i\sim \tilde p\\
\epsilon\sim p_0
}}
\Bigl[
\bigl\|
v_\theta\bigl(
t,
\bar{s}(t,\boldsymbol{\xi}_i),
\boldsymbol{\xi}_i,
a_\Phi(\boldsymbol{\xi}_i)
\bigr)
-
(y_i-\epsilon)
\bigr\|_2^2
\Bigr].
\label{eq:reweighted-standard-cfm}
\end{equation}

The following result shows that the ERFM objective is equivalent, up to a positive constant factor, to standard observation-conditioned flow matching under the tilted empirical distribution $\tilde p$.

\begin{theorem}
\label{thm:erfm-reweight}

Let the energy scores $\{E_i\}_{i=1}^N$ be fixed during Stage~2, and let $w_i=w(E_i)>0$ denote the corresponding weights. Then the ERFM objective in \eqref{eq:er-fm-loss} satisfies
\begin{equation}
\mathcal{L}_{\mathrm{ERFM}}(\theta,\Phi)
=
C_w\,
\widetilde{\mathcal{L}}_{\mathrm{CFM}}(\theta,\Phi),
\label{eq:erfm-reweight-identity}
\end{equation}
where
\begin{equation}
C_w
:=
\frac{1}{N}\sum_{j=1}^N w_j
>0.
\label{eq:erfm-normalization-constant}
\end{equation}
Consequently,
\begin{equation}
\nabla_{(\theta,\Phi)}
\mathcal{L}_{\mathrm{ERFM}}(\theta,\Phi)
=
C_w\,
\nabla_{(\theta,\Phi)}
\widetilde{\mathcal{L}}_{\mathrm{CFM}}(\theta,\Phi),
\label{eq:erfm-gradient-identity}
\end{equation}
and the two objectives have the same minimizers.
\end{theorem}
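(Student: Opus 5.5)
The identity is a finite-sum computation: write both objectives as averages over the observation index, with the inner expectation over $(t,\epsilon)$ held fixed. The plan is to make the empirical interpretation of $p_{\mathrm{data}}$ explicit. In \eqref{eq:er-fm-loss} the pair $(\boldsymbol{\xi}_i,y_i)$ is drawn from the empirical distribution, so the index $i$ is uniform on $\{1,\dots,N\}$ and independent of $t$ and $\epsilon$. For each $i$ I introduce the per-sample loss
\[
\ell_i(\theta,\Phi):=\mathbb{E}_{t\sim\mathrm{Unif}[0,1],\,\epsilon\sim p_0}\Bigl[\bigl\|v_\theta\bigl(t,\bar{s}(t,\boldsymbol{\xi}_i),\boldsymbol{\xi}_i,a_\Phi(\boldsymbol{\xi}_i)\bigr)-(y_i-\epsilon)\bigr\|_2^2\Bigr].
\]
The same function $\ell_i$ appears in both objectives, because the bridge construction \eqref{eq:linear-bridge} and the target velocity \eqref{eq:conditional-velocity} do not depend on how $i$ is sampled.

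The first step uses the fact that the energy scores, and hence the weights $w_i$, are computed once from the frozen teacher and do not depend on $(\theta,\Phi)$, $t$ or $\epsilon$. This lets me pull $w_i$ out of the inner expectation (tower property / Fubini) and obtain $\mathcal{L}_{\mathrm{ERFM}}=\frac1N\sum_{i=1}^N w_i\,\ell_i$. The second step evaluates the outer expectation over $i\sim\tilde p$ in \eqref{eq:reweighted-standard-cfm}, which gives $\widetilde{\mathcal{L}}_{\mathrm{CFM}}=\sum_i \tilde w_i\,\ell_i$. Substituting the definition \eqref{eq:normalized-erfm-weights} of $\tilde w_i$ then yields
\[
\mathcal{L}_{\mathrm{ERFM}}=\frac{1}{N}\sum_{j=1}^N w_j\sum_i\tilde w_i\ell_i=C_w\,\widetilde{\mathcal{L}}_{\mathrm{CFM}}.
\]
Positivity of $C_w$ follows from $w_i>0$; with the sigmoid choice \eqref{eq:weighting-function} in particular, $w_i\in(0,1)$.

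The gradient identity \eqref{eq:erfm-gradient-identity} follows by differentiating this identity, since $C_w$ is a constant independent of $(\theta,\Phi)$. Here the hypothesis that the $E_i$ are fixed during Stage~2 is essential: if the weights depended on the current parameters, extra terms $\nabla w_i\,\ell_i$ would appear. The claim about minimizers follows because multiplying a function by a positive constant preserves its argmin set, and also preserves its local minimizers and stationary points.

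The only step needing care, and the main potential obstacle, is interchanging the expectation with the sum and with differentiation. This requires each $\ell_i$ to be finite and differentiable, with differentiation allowed under the integral sign. I would justify this under the mild regularity assumed for $v_\theta$ and $a_\Phi$: a smooth network with bounded parameter sets gives polynomial growth in $(u,t)$, and $p_0$ Gaussian has all moments, so dominated convergence applies. Because the outer sum over $i$ is finite, no further integrability issue arises for it.
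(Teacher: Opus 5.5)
Your proof is correct and follows the paper's argument essentially verbatim: you introduce the per-sample loss $\ell_i$, write $\mathcal{L}_{\mathrm{ERFM}}=\frac1N\sum_i w_i\ell_i$ and $\widetilde{\mathcal{L}}_{\mathrm{CFM}}=\sum_i\tilde w_i\ell_i$, compare the two, and then differentiate using that $C_w$ is independent of $(\theta,\Phi)$. The regularity discussion at the end is harmless but unnecessary: the gradient identity follows directly from the equality of the two objectives as functions of $(\theta,\Phi)$, so no interchange of differentiation and expectation is needed.
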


\begin{proof}
For each observation index $i$, define
\[
\ell_i(\theta,\Phi)
:=
\mathbb{E}_{\substack{
t\sim\mathrm{Unif}[0,1]\\
\epsilon\sim p_0
}}
\Bigl[
\bigl\|
v_\theta\bigl(
t,
\bar{s}(t,\boldsymbol{\xi}_i),
\boldsymbol{\xi}_i,
a_\Phi(\boldsymbol{\xi}_i)
\bigr)
-
(y_i-\epsilon)
\bigr\|_2^2
\Bigr].
\]

Using the empirical distribution in \eqref{eq:normalized-erfm-weights}--\eqref{eq:reweighted-erfm-distribution}, the weighted flow-matching objective in \eqref{eq:reweighted-standard-cfm} can be written as
\[
\widetilde{\mathcal{L}}_{\mathrm{CFM}}(\theta,\Phi)
=
\sum_{i=1}^N
\tilde w_i\,\ell_i(\theta,\Phi)
=
\frac{1}{\sum_{j=1}^N w_j}
\sum_{i=1}^N
w_i\,\ell_i(\theta,\Phi).
\]

On the other hand, by the definition of the empirical expectation in \eqref{eq:er-fm-loss},
\[
\mathcal{L}_{\mathrm{ERFM}}(\theta,\Phi)
=
\frac{1}{N}
\sum_{i=1}^N
w_i\,\ell_i(\theta,\Phi).
\]

Combining the two identities yields
\[
\mathcal{L}_{\mathrm{ERFM}}(\theta,\Phi)
=
\left(
\frac{1}{N}\sum_{j=1}^N w_j
\right)
\widetilde{\mathcal{L}}_{\mathrm{CFM}}(\theta,\Phi),
\]
which proves \eqref{eq:erfm-reweight-identity}. Since the energy scores are computed once from the frozen Stage-1 teacher and remain fixed throughout Stage~2, the constant $C_w$ is independent of $(\theta,\Phi)$. Differentiating \eqref{eq:erfm-reweight-identity} therefore yields \eqref{eq:erfm-gradient-identity}. Because $C_w>0$, the two objectives have the same minimizers.
\end{proof}

Theorem~\ref{thm:erfm-reweight} shows that ERFM is equivalent to standard observation-conditioned flow matching under an empirical distribution tilted toward low-energy observations. In this sense, Stage~2 does not modify the transport mechanism itself; rather, it changes the effective training distribution seen by the optimizer. Observations assigned large energy by the frozen Stage-1 teacher contribute less during fine-tuning, whereas observations that are more compatible with the learned physics--data structure receive larger weights.

This interpretation also has a natural population counterpart. Let
$Z=(\boldsymbol{\xi},y)$ denote a generic observation with distribution
$p_{\mathrm{data}}$, and let $E(Z)$ be the frozen energy score induced by the
Stage-1 teacher. The population analogue of the Stage-2 objective is the
weighted risk
\[
\mathbb{E}_{Z\sim p_{\mathrm{data}}}
\bigl[
w(E(Z))\,\ell(\theta,\Phi;Z)
\bigr],
\]
where $\ell(\theta,\Phi;Z)$ denotes the observation-conditioned flow-matching
integrand associated with $Z$. Define the normalization constant
\[
C_w
:=
\int
w(E(z))
\,p_{\mathrm{data}}(dz),
\]
and assume $0<C_w<\infty$. This induces the probability measure
\begin{equation}
q(dz)
:=
\frac{
w(E(z))\,p_{\mathrm{data}}(dz)
}{
C_w
},
\label{eq:population-tilted-distribution}
\end{equation}
which is the energy-tilted version of the original data distribution. Under
this definition,
\[
\mathbb{E}_{Z\sim p_{\mathrm{data}}}
\bigl[
w(E(Z))\,\ell(\theta,\Phi;Z)
\bigr]
=
C_w\,
\mathbb{E}_{Z\sim q}
\bigl[
\ell(\theta,\Phi;Z)
\bigr].
\]

Hence, at the population level, ERFM is equivalent up to the positive constant
$C_w$ to standard observation-conditioned flow matching under the tilted
distribution $q$. In this sense, the second stage concentrates the effective
training distribution on observations that are more compatible with the
Stage-1 physics--data structure.

% This also clarifies the scope of the ``unlearning'' interpretation. ERFM does not provide exact machine unlearning in the sense of removing all information associated with a sample from the trained parameters. Rather, it performs a distributional reweighting step: observations with small weights exert less influence on the Stage-2 optimization trajectory, so high-energy samples are progressively suppressed during fine-tuning.\rm{remove?}

\subsection{Stage-2 refinement}
\label{sec:erfm-refinement}

In practice, the Stage-2 optimization augments the energy-weighted flow-matching term with the same physics regularizers used in Stage~1. The resulting objective is
\begin{equation}
\begin{split}
\mathcal{J}_{\mathrm{ERFM}}(\theta,\Phi)
= {}& \mathcal{L}_{\mathrm{ERFM}}(\theta,\Phi)
      + \lambda_{\mathrm{loc}} \mathcal{L}_{\mathrm{PDE,loc}}(\theta,\Phi) \\
    & + \lambda_{\mathrm{PDE}} \mathcal{L}_{\mathrm{PDE}}(\theta,\Phi)
      + \lambda_{\mathrm{BC}}^u \mathcal{L}_{\mathrm{BC}}^u(\theta,\Phi)
      + \lambda_{\mathrm{BC}}^a \mathcal{L}_{\mathrm{BC}}^a(\Phi),
\end{split}
\label{eq:erfm-loss}
\end{equation}
where $\mathcal{L}_{\mathrm{PDE,loc}}$ and $\mathcal{L}_{\mathrm{PDE}}$ are the local and global physics penalties defined in \eqref{eq:local-pde-loss} and \eqref{eq:pde-loss-pgcfm}, respectively. If boundary or initial-condition penalties are included in Stage~1, the corresponding terms may also be retained in Stage~2.

The coefficients $\lambda_{\mathrm{loc}}$ and $\lambda_{\mathrm{PDE}}$ are typically set equal to their Stage-1 counterparts. Stage~2 is not intended to relearn the full physics-guided model from scratch. Instead, it refines the Stage-1 solution using an energy-weighted observation loss, in which low-energy observations have larger influence on the transport update and high-energy observations are downweighted. The retained physics penalties act as soft constraints that stabilize the transport dynamics and the recovered fields during fine-tuning.

Stage~2 is initialized from the PG-CFM parameters $(\theta^0,\Phi^0)$ and run for fewer optimization steps. Together with the reweighting interpretation in Section~\ref{sec:erfm-theory}, this shows that ERFM may be viewed as a physics-informed refinement stage: it shifts the effective training distribution toward observations that are more compatible with the frozen teacher, while retaining enough physical regularization to prevent the fine-tuned model from drifting away from the PDE structure learned in Stage~1.

Algorithm~\ref{alg:erfm} summarizes one fine-tuning step of ERFM, including the energy-weighted flow-matching term, the retained physics penalties, and the optional boundary or initial-condition contributions.

\begin{algorithm}[t]
\caption{Stage~2: ERFM fine-tuning step (one minibatch)}
\label{alg:erfm}
\begin{algorithmic}[1]
  \STATE \textbf{Input:} observation minibatch $(\boldsymbol{\xi},y,E)$; current parameters $(\theta,\Phi)$ 
  \STATE \# Stage~2 is initialized from $(\theta^0,\Phi^0)$, and energies $E$ are precomputed from the frozen Stage-1 model via \eqref{eq:energy-score}
  \STATE sample $t \sim \mathrm{Unif}[\varepsilon_{\mathrm{fm}},1-\varepsilon_{\mathrm{fm}}]$, $\epsilon \sim p_0$
  \STATE $s_t \leftarrow (1-t)\epsilon + ty$ \hfill \# bridge to noisy observation
  \STATE $u_{\mathrm{loc}} \leftarrow \mathrm{FlowStep}(v_\theta,s_t,\boldsymbol{\xi},a_\Phi)$
  \STATE  $w(E)=\sigma(-\lambda E)$
  \STATE $\mathcal{L}_{\mathrm{ERFM}} \leftarrow w \cdot \mathrm{FM\_loss}(v_\theta,s_t,y,\epsilon)$ \hfill \# \eqref{eq:er-fm-loss}
  \STATE $\mathcal{L}_{\mathrm{PDE,loc}} \leftarrow \mathrm{LocalPDE\_loss}(u_{\mathrm{loc}},a_\Phi,\boldsymbol{\xi})$
  \STATE $u_{\mathrm{loc}} \leftarrow \Gamma_{t\to1}^{(K_1)}(s_t,\boldsymbol{\xi},a_\Phi(\boldsymbol{\xi});\theta)$
  \STATE $\mathcal{J}_{\mathrm{ERFM}} \leftarrow \mathcal{L}_{\mathrm{ERFM}} + \lambda_{\mathrm{loc}}\mathcal{L}_{\mathrm{PDE,loc}} + \lambda_{\mathrm{PDE}}\mathcal{L}_{\mathrm{PDE}}$ \hfill \# \eqref{eq:erfm-loss}
  \STATE \textbf{if} boundary or initial penalties are retained \textbf{then}
  \STATE \hspace{1.5em} $\mathcal{J}_{\mathrm{ERFM}} \leftarrow \mathcal{J}_{\mathrm{ERFM}} + \lambda_{\mathrm{BC}}^{u}\mathcal{L}_{\mathrm{BC}}^u + \lambda_{\mathrm{BC}}^{a}\mathcal{L}_{\mathrm{BC}}^a$
  \STATE update $(\theta,\Phi)$ by one optimizer step on $\mathcal{J}_{\mathrm{ERFM}}$
\end{algorithmic}
\end{algorithm}

\section{Numerical Experiments}
\label{sec:experiments}

We evaluate PG-CFM-ERFM on five inverse problems: the Poisson inverse problem (PInv), the Navier--Stokes inverse problem (NSInv), the Heat inverse problem (HInv), the Burgers inverse problem (BInv), and the Wave inverse problem (WInv). These examples cover elliptic, parabolic, and hyperbolic PDEs and include both spatially varying coefficient fields and global scalar parameters. The main quantitative and qualitative comparisons focus on PInv and NSInv, while HInv, BInv, and WInv serve as additional test problems.

All experiments are repeated over three random seeds, and we report mean
$\pm$ standard deviation. Our primary metric is the relative $L^2$ error
(L2RE). Let $\hat{\mathbf{q}}=(\hat q_1,\dots,\hat q_n)$ and
$\mathbf{q}^\star=(q_1^\star,\dots,q_n^\star)$ denote the prediction and the reference solution on the test grid, respectively. We define
\begin{equation}
\mathrm{L2RE}(\hat{\mathbf{q}})
=
\frac{\|\hat{\mathbf{q}}-\mathbf{q}^\star\|_2}{\|\mathbf{q}^\star\|_2}
=
\sqrt{
\frac{\sum_{j=1}^{n}(\hat q_j-q_j^\star)^2}
     {\sum_{j=1}^{n}(q_j^\star)^2}
}.
\label{eq:l2re-def}
\end{equation}
We also report the relative $L^1$ error (L1RE), the mean squared error (MSE),
and the maximum absolute error (MAE):
\begin{align}
\mathrm{L1RE}(\hat{\mathbf{q}})
&=
\frac{\|\hat{\mathbf{q}}-\mathbf{q}^\star\|_1}{\|\mathbf{q}^\star\|_1}
=
\frac{\sum_{j=1}^{n}|\hat q_j-q_j^\star|}
     {\sum_{j=1}^{n}|q_j^\star|},
\label{eq:l1re-def}\\
\mathrm{MSE}(\hat{\mathbf{q}})
&=
\frac{1}{n}\sum_{j=1}^{n}(\hat q_j-q_j^\star)^2,
\label{eq:mse-def}\\
\mathrm{MAE}(\hat{\mathbf{q}})
&=
\max_{1\le j\le n}|\hat q_j-q_j^\star|.
\label{eq:mae-def}
\end{align}
%\rm{For field reconstruction problems, these quantities are computed on the corresponding test grid. For scalar-parameter estimation problems such as BInv and WInv, the same formulas reduce to their scalar counterparts.}

The observation data are obtained by evaluating the reference solution on a fixed grid or randomly sampled points and then adding noise. Interior collocation points are used for PDE residual enforcement. When boundary or initial constraints are imposed, additional samples are drawn on the corresponding constraint sets.

\subsection{Experimental setup}
\label{sec:exp-setup}

\subsubsection{Benchmark tasks and data generation}
\label{subsubsec:benchmarks}

We summarize the benchmark inverse problems and their data-generation procedures below.

\paragraph{Poisson inverse problem (PInv)}

We reconstruct the spatially varying diffusion coefficient $a(x,y)$ in the elliptic PDE
\begin{equation}
-\nabla\cdot\bigl(a(x,y)\nabla u(x,y)\bigr) = f(x,y),\qquad (x,y)\in\Omega=[0,1]^2.
\end{equation}
The ground-truth solution is
\begin{equation}
u(x,y)=\sin(\pi x)\sin(\pi y),
\end{equation}
and the source term is chosen so that $(a,u)$ satisfies the PDE with the ground-truth coefficient field
\begin{equation}
a(x,y)=\frac{1}{1+x^2+y^2+(x-1)^2+(y-1)^2},\qquad (x,y)\in\Omega,
\end{equation}
namely
\begin{equation}
\begin{split}
f(x,y) = {}& \frac{2\pi^2 \sin(\pi x)\sin(\pi y)}{1+x^2+y^2+(x-1)^2+(y-1)^2} \\
          & + \frac{2\pi \bigl((2x-1)\cos(\pi x)\sin(\pi y)+(2y-1)\sin(\pi x)\cos(\pi y)\bigr)}
                 {\bigl(1+x^2+y^2+(x-1)^2+(y-1)^2\bigr)^2}.
\end{split}
\end{equation}
To ensure identifiability, we impose the true coefficient values on
\(\partial\Omega\) as a boundary condition for \(a\).

\textbf{Training data.}
We collect $N_D=2{,}500$ observations of $u$ on a uniform $50\times50$ grid in $\Omega$. We corrupt $60\%$ of the grid points with i.i.d.\ Gaussian noise $\mathcal{N}(0,1.0)$ and the remaining $40\%$ with $\mathcal{N}(0,0.01)$. We sample $N_{R}=8{,}192$ interior collocation points and $N_B=2{,}048$ boundary points.

\textbf{Test data.}
We evaluate on a noise-free test set of $N_t=10{,}000$ points on a uniform $100\times100$ grid.

\paragraph{Navier--Stokes inverse problem (NSInv)}

We consider the two-dimensional incompressible Navier--Stokes system
\begin{equation}
\begin{cases}
u_t + \beta_1\bigl(u u_x + v u_y\bigr) = -p_x + \beta_2\bigl(u_{xx}+u_{yy}\bigr),\\
v_t + \beta_1\bigl(u v_x + v v_y\bigr) = -p_y + \beta_2\bigl(v_{xx}+v_{yy}\bigr),\\
u_x + v_y = 0,
\end{cases}
\end{equation}
on the space--time domain $[1,8]\times[-2,2]\times[0,7]$. We use the cylinder-flow benchmark from \citet{raissi2019physics}, for which the ground-truth coefficients are $\beta_1=1$ and $\beta_2=0.01$. The inverse task is to recover $(\beta_1,\beta_2)$ and the pressure field $p(x,y,t)$ from velocity observations.

\textbf{Training data.}
From the benchmark reference solution, we randomly select $N_D=5{,}000$ interior space--time locations
\(
\{(x_D^k,y_D^k,t_D^k)\}_{k=1}^{N_D},
\)
together with the corresponding velocity values
\(
\{(u(x_D^k,y_D^k,t_D^k),\,v(x_D^k,y_D^k,t_D^k))\}_{k=1}^{N_D}.
\)
Among these labeled velocity samples, $60\%$ are corrupted by additive Gaussian noise $\mathcal{N}(0,0.5)$ and the remaining $40\%$ by $\mathcal{N}(0,0.01)$. In addition, we sample $N_{R}=2{,}000$ interior collocation points
\(
\{(x_R^k,y_R^k,t_R^k)\}_{k=1}^{N_R}
\)
for PDE residual enforcement.

\textbf{Test data.}
Testing is performed on the remaining noise-free benchmark velocity samples not used for training.

\paragraph{Heat inverse problem (HInv)}

We reconstruct a spatially varying diffusion coefficient $a(x,y)$ in the parabolic PDE
\begin{equation}
u_t(x,y,t) - \nabla\cdot\bigl(a(x,y)\nabla u(x,y,t)\bigr) = f(x,y,t),
\qquad (x,y,t)\in[-1,1]^2\times[0,1].
\end{equation}
The ground-truth solution is
\begin{equation}
u(x,y,t)=e^{-t}\sin(\pi x)\sin(\pi y),
\end{equation}
and the ground-truth coefficient is
\begin{equation}
a(x,y)=2+\sin(\pi x)\sin(\pi y),\qquad (x,y)\in[-1,1]^2,
\end{equation}
with boundary condition $a(x,y)=2$ on $\partial\Omega$. The forcing term $f$ is defined so that the pair $(a,u)$ satisfies the PDE:
\begin{equation}
\begin{split}
f(x,y,t) = {}& \biggl[ (4\pi^2-1)\sin(\pi x)\sin(\pi y) \\
            & + \pi^2\Bigl( 2\sin^2(\pi x)\sin^2(\pi y)
                    - \cos^2(\pi x)\sin^2(\pi y)
                    - \sin^2(\pi x)\cos^2(\pi y) \Bigr) \biggr] e^{-t}.
\end{split}
\end{equation}

\textbf{Training data.}
We sample $N_D=2{,}500$ observation points uniformly at random in $[-1,1]^2\times[0,1]$ and corrupt $60\%$ with $\mathcal{N}(0,1.0)$ and $40\%$ with $\mathcal{N}(0,0.01)$. We use $N_{R}=8{,}192$ residual points and $N_B=2{,}048$ boundary/initial points.

\textbf{Test data.}
We evaluate on a noise-free uniform grid of size $60\times60\times50$ over $[-1,1]^2\times[0,1]$, i.e., $N_t=180{,}000$ points.

\paragraph{Burgers inverse problem (BInv)}

We consider the one-dimensional viscous Burgers equation on $(x,t)\in[-1,1]\times[0,1]$,
\begin{equation}
u_t + u\,u_x = \nu\,u_{xx},
\qquad (x,t)\in[-1,1]\times[0,1],
\label{eq:burgers}
\end{equation}
with unknown viscosity $\nu$. We impose the initial condition
\begin{equation}
u(x,0)=0.5-0.5\tanh(2.5x),
\end{equation}
and Dirichlet boundary conditions
\begin{equation}
u(-1,t)=0.5-0.5\tanh(2.5(-1-0.5t)),
u(1,t)=0.5-0.5\tanh(2.5(1-0.5t)).
\end{equation}
For $\nu=0.1$, the analytic solution is
\begin{equation}
u(x,t)=0.5-0.5\tanh(2.5(x-0.5t)).
\end{equation}

\textbf{Training data.}
We generate solution values on a uniform $200\times200$ grid over $[-1,1]\times[0,1]$ and randomly select $N_D=10{,}000$ points as labeled observations. We corrupt $60\%$ of these points with additive Gaussian noise $\mathcal{N}(0,1.0)$ and the remaining $40\%$ with $\mathcal{N}(0,0.01)$. We additionally sample $N_R=2{,}000$ interior collocation points, $N_I=100$ initial-condition points on $\{t=0\}$, and $N_B=200$ boundary points on $\{x=-1,1\}$.

\textbf{Test data.}
We use the remaining grid points as a noise-free test set.

\paragraph{One-dimensional wave inverse problem (WInv)}

We consider the 1D wave equation on $(x,t)\in[0,1]\times[0,1]$ with unknown wave speed $c$:
\begin{equation}
u_{tt}(x,t) - c^2 u_{xx}(x,t) = 0.
\label{eq:WInv}
\end{equation}
The initial and boundary conditions are
\begin{equation*}
u(x,0)=\sin(\pi x)+\tfrac{1}{2}\sin(4\pi x),\qquad
u_t(x,0)=0,\qquad
u(0,t)=u(1,t)=0.
\end{equation*}
For $c=2$, the analytic solution is
\begin{equation*}
u(x,t)=\sin(\pi x)\cos(c\pi t)+\tfrac{1}{2}\sin(4\pi x)\cos(4c\pi t).
\end{equation*}

\textbf{Training data.}
We generate data on a uniform $200\times200$ grid over $[0,1]\times[0,1]$ and randomly select $N_D=10{,}000$ points for training, corrupting $60\%$ with $\mathcal{N}(0,1.0)$ and $40\%$ with $\mathcal{N}(0,0.01)$. We sample $N_R=2{,}000$ collocation points in the interior, $N_I=100$ initial points, and $N_B=200$ boundary points.

\textbf{Test data.}
We use the remaining grid points as a noise-free test set.

\subsubsection{Ablation settings}
\label{subsubsec:exp-ablation-settings}

For the physics-placement ablation, we compare CFM
$(\lambda_{\mathrm{loc}}=\lambda_{\mathrm{PDE}}=0)$, PG-CFM with local loss only (CFM-$\ell$, $\lambda_{\mathrm{loc}}>0,\lambda_{\mathrm{PDE}}=0$), and PG-CFM with global
loss only (CFM-g, $\lambda_{\mathrm{loc}}=0,\lambda_{\mathrm{PDE}}>0$), and PG-CFM with both
physics losses active. This isolates the respective contributions of the
bridge-local and global physics terms.

% For the second-stage reweighting ablation, all methods first train the
% Stage-1 PG-CFM model. The second-stage alternatives are continued PG-CFM
% training with the same unweighted objective, self-distilled PG-CFM retrained
% on labels obtained from the Stage-1 predictions, self-refined PG-CFM
% initialized from the Stage-1 parameters and further trained on the same
% refined labels, and ERFM applied to the original noisy observations with
% energy-based weights. These controls separate the effect of energy
% reweighting from that of additional optimization and label refinement.

For the second-stage reweighting ablation, all methods start from the same trained Stage-1 PG-CFM model and use the same Stage-2 optimization budget. We compare ERFM with continued PG-CFM training and label-refinement controls, as defined in the baseline descriptions below. This comparison is used to distinguish the effect of energy-based reweighting from that of additional optimization or replacement of the noisy labels by Stage-1 predictions.

For the robustness ablations, we use NSInv as the representative benchmark and
examine three aspects of the corruption model: the corruption ratio $\rho$,
the corrupted-noise scale $\sigma_{\mathrm{bad}}$, and the noise type. To study
the effect of the corruption ratio, we vary
$\rho\in\{0.2,0.4,0.6,0.8,1.0\}$ while fixing
$\sigma_{\mathrm{bad}}=0.5$. To study the effect of the noise scale, we vary
$\sigma_{\mathrm{bad}}\in\{0.1,0.3,0.5,0.7,0.9\}$ while fixing $\rho=0.6$.
To study the effect of the noise type, we consider Gaussian, Laplacian,
Student-$t$, uniform, and outlier-mixture noise. For the Laplacian distribution,
the scale is set to $b=\sigma/\sqrt{2}$; for Student-$t$, we use $\nu=3$; and
for uniform noise, we use $\mathrm{Unif}(-\sqrt{3}\sigma,\sqrt{3}\sigma)$ so
that the variance is $\sigma^2$. The outlier mixture is defined by drawing
from $\mathcal{N}(0,2.0^2)$ with probability $0.1$ and from
$\mathcal{N}(0,\sigma^2)$ otherwise.

\subsubsection{Baselines}
\label{subsubsec:baselines}

Unless otherwise stated, all baselines are given the same observation set, corruption protocol, and available collocation or boundary/initial-condition data. For the grid-based generative baselines, the same observations are first interpolated onto regular grids required by their architectures.

\paragraph{PINN}
The vanilla PINN baseline \citep{raissi2019physics} learns a direct
coordinate-to-state map together with the unknown coefficient field (or scalar
parameters) by minimizing a composite objective consisting of a data-misfit
term, a PDE residual term, and boundary or initial-condition penalties.

\paragraph{R-PINN}
R-PINN uses the same architecture and training data as PINN, but replaces the
quadratic data-misfit term by a robust Huber-type penalty
\citep{meyer2021alternative}. This isolates the effect of loss-level
robustification within the PINN framework.

\paragraph{B-PINN}
B-PINN \citep{yang2021b} uses the same PINN formulation but treats the network
parameters in a Bayesian manner and forms predictions by posterior averaging.
It serves as a probabilistic PINN baseline under the same data and physics
constraints.

\paragraph{CFM}
The CFM baseline uses the same observation-conditioned bridge construction as
our method, but is trained only with the flow-matching loss
$\mathcal{L}_{\mathrm{CFM}}$ and does not include PDE regularization. This
isolates the contribution of the physics terms.

\paragraph{PG-CFM}
PG-CFM is the Stage-1 version of our method. It uses the same coordinate-based
flow model and observations as the full method, but is trained only with the
physics-guided objective in \eqref{eq:pg-cfm-loss}, without the Stage-2
energy-based refinement.

\paragraph{SD-PG-CFM and SF-PG-CFM}
SD-PG-CFM (\emph{self-distilled PG-CFM}) replaces the noisy observations with
labels obtained from the trained Stage-1 PG-CFM predictions and retrains
PG-CFM on these refined labels. SF-PG-CFM (\emph{self-refined PG-CFM}) uses
the same refined labels but continues optimization from the Stage-1 parameters
rather than retraining from scratch. These controls separate the effect of
second-stage reweighting from that of label refinement alone.

\paragraph{Continued PG-CFM}
We also consider a continued-training version of PG-CFM with the same Stage-2
optimization budget but without energy reweighting, that is, with uniform
sample weights. This isolates the effect of reweighting from that of
additional optimization alone.

\paragraph{PIDM and PBFM}
For the main PInv and NSInv benchmarks, we additionally compare with the
grid-based generative physics baselines PIDM (including the two variants
PIDM-ME and PIDM-SE) \citep{bastek2025physics} and PBFM
\citep{baldan2025flow}. These methods are trained on the same inverse
instance and use the same noisy observations, but unlike our coordinate-based
formulation they require the sparse observations to be interpolated onto
regular grids before training. This comparison shows how grid-dependent
generative baselines perform in the same corrupted inverse setting.

\subsubsection{Implementation details and training settings}
\label{subsubsec:implementation}

Our model consists of a time-dependent vector field $v_\theta$ and either a
coefficient network $a_\Phi$ or a trainable scalar parameter, depending on the
benchmark. For time-dependent problems, we use $t$ for the flow time and
$\tau$ for the physical time variable. The vector field takes as input the
current state, the flow time, the physical coordinates, and problem-dependent
inputs when applicable. The benchmark-specific network parameterizations are
listed in Table~\ref{tab:nn-architectures-main}.

For each training pair $(\boldsymbol{\xi},y)$, we sample
$\epsilon\sim p_0=\mathcal N(0,I)$ and
$t\sim\mathrm{Unif}[\varepsilon_{\mathrm{fm}},1-\varepsilon_{\mathrm{fm}}]$,
construct the bridge
$s(t,\boldsymbol{\xi})=(1-t)\epsilon+t y$, and use target velocity
$y-\epsilon$. The local and global physics terms are evaluated with fixed-step
Heun integration: the local term integrates from the sampled bridge time $t$
to $1$ using $K_1$ steps, while the global term integrates from $0$ to $1$
using $K_2$ steps and starts from the deterministic base state
$u_{\mathrm{base}}=\mathbf 0$. The global physics term and the boundary or
initial-condition terms are evaluated every 10 optimization
steps. Stage~1 minimizes the PG-CFM objective \eqref{eq:pg-cfm-loss}, and
Stage~2 is initialized from the Stage-1 parameters and minimizes the ERFM
objective \eqref{eq:erfm-loss}. The energy scores are computed once from the
frozen Stage-1 model, normalized by the median and median absolute deviation,
and converted into weights $w_i=\sigma(-\lambda E_i)$. We use Adam in both
stages. The benchmark-specific integration settings and training
hyperparameters are summarized in
Table~\ref{tab:training-config-merged}.

Hyperparameters are chosen in a stage-wise manner. In Stage~1, we tune the
physics weights $(\lambda_{\mathrm{loc}},\lambda_{\mathrm{PDE}})$ to obtain a
stable baseline model. In Stage~2, we use
$\omega_{\mathrm{obs}}=1.0$ and $\omega_{\mathrm{phys}}=0.1$ as default values,
reducing $\omega_{\mathrm{phys}}$ only when the PDE residual term is
comparatively large. We also use $\kappa=0.5$ and $\lambda=5$ as default
settings in most experiments. Across benchmarks, the main differences are the weights of the local and global physics penalties, with smaller adjustments to \(\omega_{\mathrm{obs}}\) and \(\omega_{\mathrm{phys}}\) in the Stage-2 energy score.

\begin{table}[t]
\centering
\caption{Neural parameterizations used in PG-CFM-ERFM. For time-dependent problems, $t$ denotes the flow time and $\tau$ denotes the physical time variable.}
\label{tab:nn-architectures-main}
\small
\begin{tabular}{llll}
\toprule
Benchmark & Network & Mapping & Architecture \\
\midrule
NSInv
& $v_\theta$
& $(u,v,t,x,y,\tau)\mapsto\mathbb R^2$
& 3-layer MLP, width 256, SiLU \\
& $p_\Phi$
& $(x,y,\tau)\mapsto\mathbb R$
& 4-layer MLP, width 256, SiLU \\
& $(\beta_1,\beta_2)$
& trainable scalars
& -- \\
\midrule
PInv
& $v_\theta$
& $(u,t,x,y,f)\mapsto\mathbb R$
& 3-layer MLP, width 64, SiLU \\
& $a_\Phi$
& $(x,y)\mapsto\mathbb R$
& 4-layer MLP, width 64, SiLU \\
\midrule
HInv
& $v_\theta$
& $(u,t,x,y,\tau,a_\Phi,f)\mapsto\mathbb R$
& 6-layer MLP, width 128, Tanh \\
& $a_\Phi$
& $(x,y)\mapsto\mathbb R$
& 4-layer MLP, width 128, Tanh \\
\midrule
WInv
& $v_\theta$
& $(u,t,x,\tau)\mapsto\mathbb R$
& 3-layer MLP, width 256, SiLU \\
& $c$
& trainable scalar
& -- \\
\midrule
BInv
& $v_\theta$
& $(u,t,x,\tau,f)\mapsto\mathbb R$
& 3-layer MLP, width 64, SiLU \\
& $\nu$
& trainable scalar
& -- \\
\bottomrule
\end{tabular}
\end{table}

\begin{table}[t]
\centering
\caption{Benchmark-specific integration settings and training hyperparameters for PG-CFM-ERFM. Here ``lr'' denotes the learning rate.}
\label{tab:training-config-merged}
\small
\setlength{\tabcolsep}{3pt}
\resizebox{\linewidth}{!}{
\begin{tabular}{lccccccccc}
\toprule
Benchmark
& $K_1$
& $K_2$
& Stage 1 lr
& Stage 1 epochs
& Stage 2 lr
& Stage 2 epochs
& $\lambda$
& $\kappa$
& $\omega_{\mathrm{phys}}$ \\
\midrule
NSInv & 5  & 10 & $10^{-3}$        & 5000 & $10^{-3}$        & 1000 & 5.0 & 0.5 & 0.10 \\
PInv  & 5  & 10 & $10^{-3}$        & 7000 & $10^{-4}$        & 1000 & 5.0 & 0.5 & 0.05 \\
HInv  & 10 & 10 & $10^{-3}$        & 3600 & $10^{-3}$        & 500  & 1.0 & 0.0 & 0.10 \\
WInv  & 10 & 10 & $10^{-3}$        & 5000 & $2\times10^{-4}$ & 3000 & 5.0 & 0.5 & 0.10 \\
BInv  & 10 & 10 & $10^{-3}$        & 5000 & $10^{-3}$        & 1000 & 5.0 & 0.5 & 0.01 \\
\bottomrule
\end{tabular}}
\end{table}

\subsection{Main results on core inverse problems}
\label{sec:main-results}

\subsubsection{Quantitative comparison on PInv and NSInv}
\label{subsubsec:main_quant}

Tables~\ref{tab:poisson_metrics} and~\ref{tab:ns_metrics} summarize the quantitative results on PInv and NSInv under mixed corruption, shown as mean
$\pm$ standard deviation over three random seeds.

On PInv, PG-CFM-ERFM attains the lowest L2RE, L1RE, MSE, and MAE, with MSE reduced to $1.04\mathrm{E}{-}4$. CFM performs much worse under corrupted labels, while the PINN-based baselines remain less accurate; among them, R-PINN provides only marginal and non-uniform gains over PINN, whereas B-PINN exhibits the largest mean error and variance. The grid-based generative baselines PIDM and PBFM also perform substantially worse than the proposed method, which reflects the difficulty of handling sparse, off-grid observations after interpolation onto regular grids.

\begin{table}[t]
\centering
\caption{Mean $\pm$ standard deviation of error metrics for PInv coefficient recovery.}
\label{tab:poisson_metrics}
\setlength{\tabcolsep}{2pt}
\scalebox{0.8}{
\begin{tabular}{lccccccccc}
\toprule
Metric
& PG-CFM-ERFM
& PG-CFM
& CFM
& PINN
& R-PINN
& B-PINN
& PBFM
& PIDM-ME
& PIDM-SE \\
\midrule
L2RE
& \best{1.95E-2} & 2.61E-2 & 1.91E-1 & 5.63E-2 & 5.43E-2 & 3.27E-1 & 1.02E+0 & 6.58E-1 & 9.84E-1 \\
& {\scriptsize $\pm$1.62E-2} & {\scriptsize $\pm$2.55E-2} & {\scriptsize $\pm$3.05E-3}
& {\scriptsize $\pm$1.49E-2} & {\scriptsize $\pm$5.84E-3} & {\scriptsize $\pm$1.89E-1}
& {\scriptsize $\pm$3.05E-2} & {\scriptsize $\pm$8.00E-2} & {\scriptsize $\pm$1.88E-2} \\
\midrule
L1RE
& \best{1.55E-2} & 2.13E-2 & 1.57E-1 & 4.56E-2 & 4.19E-2 & 2.85E-1 & 1.00E+0 & 6.03E-1 & 9.84E-1 \\
& {\scriptsize $\pm$1.34E-2} & {\scriptsize $\pm$2.12E-2} & {\scriptsize $\pm$2.53E-3}
& {\scriptsize $\pm$1.46E-2} & {\scriptsize $\pm$5.41E-3} & {\scriptsize $\pm$1.80E-1}
& {\scriptsize $\pm$7.15E-3} & {\scriptsize $\pm$4.43E-2} & {\scriptsize $\pm$1.86E-2} \\
\midrule
MSE
& \best{1.04E-4} & 2.09E-4 & 6.82E-3 & 6.33E-4 & 5.58E-4 & 2.66E-2 & 1.94E-1 & 8.19E-2 & 1.80E-1 \\
& {\scriptsize $\pm$1.43E-4} & {\scriptsize $\pm$3.17E-4} & {\scriptsize $\pm$2.17E-4}
& {\scriptsize $\pm$3.43E-4} & {\scriptsize $\pm$1.17E-4} & {\scriptsize $\pm$2.50E-2}
& {\scriptsize $\pm$1.17E-2} & {\scriptsize $\pm$2.04E-2} & {\scriptsize $\pm$6.85E-3} \\
\midrule
MAE
& \best{1.72E-2} & 2.22E-2 & 1.59E-1 & 5.61E-2 & 5.69E-2 & 2.63E-1 & 8.73E-1 & 2.59E-1 & 4.23E-1 \\
& {\scriptsize $\pm$1.26E-2} & {\scriptsize $\pm$2.01E-2} & {\scriptsize $\pm$2.49E-3}
& {\scriptsize $\pm$8.81E-3} & {\scriptsize $\pm$1.08E-2} & {\scriptsize $\pm$1.05E-1}
& {\scriptsize $\pm$5.75E-1} & {\scriptsize $\pm$1.90E-2} & {\scriptsize $\pm$8.00E-3} \\
\bottomrule
\end{tabular}}
\end{table}

On NSInv, PG-CFM-ERFM reduces L2RE from $3.17\mathrm{E}{+}1$ for CFM to $4.06\mathrm{E}{-}1$ and also outperforms PINN, R-PINN, and B-PINN by a clear margin. Compared with PIDM and PBFM, PG-CFM-ERFM achieves lower mean errors on all reported metrics, with the clearest gains in relative-error and MSE metrics; the MAE gap against PIDM is smaller. The difference between CFM and PG-CFM indicates that adding PDE residual penalties substantially improves this benchmark under the tested corruption setting. The additional reduction from PG-CFM to PG-CFM-ERFM is consistent with the effect of downweighting observations assigned large physics--data energy by the frozen Stage-1 model.
They also highlight the advantage of the mesh-free coordinate-based formulation: PG-CFM-ERFM directly uses irregular observations, whereas PIDM and PBFM depend on grid interpolation and do not include an intrinsic mechanism for sample-adaptive suppression of corrupted measurements.

\begin{table}[t]
\centering
\caption{Mean $\pm$ standard deviation of error metrics for NSInv pressure recovery.}
\label{tab:ns_metrics}
\setlength{\tabcolsep}{2pt}
\scalebox{0.8}{
\begin{tabular}{lccccccccc}
\toprule
Metric
& PG-CFM-ERFM
& PG-CFM
& CFM
& PINN
& R-PINN
& B-PINN
& PBFM
& PIDM-ME
& PIDM-SE \\
\midrule
L2RE
& \best{4.06E-1} & 5.75E-1 & 3.17E+1 & 3.44E+0 & 3.58E+0 & 1.99E+0 & 1.01E+0 & 1.55E+0 & 1.50E+0 \\
& {\scriptsize $\pm$4.75E-2} & {\scriptsize $\pm$7.38E-2} & {\scriptsize $\pm$7.90E+0}
& {\scriptsize $\pm$1.09E+0} & {\scriptsize $\pm$2.24E+0} & {\scriptsize $\pm$1.84E-1}
& {\scriptsize $\pm$4.50E-3} & {\scriptsize $\pm$2.00E-1} & {\scriptsize $\pm$3.82E-1} \\
\midrule
L1RE
& \best{4.26E-1} & 5.98E-1 & 3.33E+1 & 3.80E+0 & 4.86E+0 & 2.26E+0 & 1.03E+0 & 2.01E+0 & 1.93E+0 \\
& {\scriptsize $\pm$6.13E-2} & {\scriptsize $\pm$7.81E-2} & {\scriptsize $\pm$9.27E+0}
& {\scriptsize $\pm$8.43E-1} & {\scriptsize $\pm$3.30E+0} & {\scriptsize $\pm$1.98E-1}
& {\scriptsize $\pm$3.28E-3} & {\scriptsize $\pm$2.52E-1} & {\scriptsize $\pm$5.16E-1} \\
\midrule
MSE
& \best{1.78E-3} & 3.56E-3 & 1.12E+1 & 2.73E-1 & 3.10E-1 & 1.21E+0 & 1.96E-2 & 4.23E-2 & 4.16E-2 \\
& {\scriptsize $\pm$4.05E-4} & {\scriptsize $\pm$9.18E-4} & {\scriptsize $\pm$5.30E+0}
& {\scriptsize $\pm$1.47E-1} & {\scriptsize $\pm$3.19E-1} & {\scriptsize $\pm$2.26E-1}
& {\scriptsize $\pm$1.74E-4} & {\scriptsize $\pm$1.06E-2} & {\scriptsize $\pm$1.87E-2} \\
\midrule
MAE
& \best{1.70E-1} & 2.34E-1 & 1.21E+1 & 3.46E+0 & 1.44E+0 & 3.86E+0 & 5.35E-1 & 1.83E-1 & 1.76E-1 \\
& {\scriptsize $\pm$3.00E-2} & {\scriptsize $\pm$3.56E-2} & {\scriptsize $\pm$2.06E+0}
& {\scriptsize $\pm$1.03E+0} & {\scriptsize $\pm$2.48E-1} & {\scriptsize $\pm$1.82E-1}
& {\scriptsize $\pm$8.70E-3} & {\scriptsize $\pm$2.30E-2} & {\scriptsize $\pm$4.70E-2} \\
\bottomrule
\end{tabular}}
\end{table}

Table~\ref{tab:nsinv_beta_mse} reports the recovery errors for the NSInv
coefficients $(\beta_1,\beta_2)$. Relative to CFM, both PG-CFM and
PG-CFM-ERFM substantially reduce the coefficient errors. PG-CFM-ERFM gives
the lowest MSE for $\beta_2$ and improves the recovery of $\beta_1$ over most
baselines, although R-PINN gives the smallest MSE for $\beta_1$. Taken
together with Table~\ref{tab:ns_metrics}, these results show that the proposed
method remains competitive for both pressure recovery and coefficient identification on this problem.

\begin{table}[t]
    \centering
    \caption{Mean $\pm$ standard deviation of MSE for the recovered NSInv coefficients $\beta_1$ and $\beta_2$ (true values: $\beta_1=1.0$, $\beta_2=0.01$).}
    \label{tab:nsinv_beta_mse}
    \setlength{\tabcolsep}{2pt}
    \resizebox{\linewidth}{!}{
    \begin{tabular}{l|ccccccccc}
        \toprule
        Methods
        & \textbf{PG-CFM-ERFM}
        & PG-CFM
        & CFM
        & PINN
        & R-PINN
        & B-PINN 
        & PBFM 
        & PIDM-ME
        & PIDM-SE
        \\
        \midrule
        $\beta_1$
        & {1.27E-2} & 6.47E-2 & 2.57E-1 & 2.47E-1 & \best{3.11E-3} & 5.30E-2 & 1.00E+0 & 2.30E-1 & 9.88E-1\\
        & {\scriptsize $\pm$1.36E-3} & {\scriptsize $\pm$1.80E-3} & {\scriptsize $\pm$1.50E-2} & {\scriptsize $\pm$8.49E-2} & {\scriptsize $\pm$1.69E-3} & {\scriptsize $\pm$2.30E-2} & {\scriptsize $\pm$4.07E-4} & {\scriptsize $\pm$3.52E-2} & {\scriptsize $\pm$3.82E-3} \\
        \midrule
        $\beta_2$
        & \best{3.53E-6} & 9.00E-6 & 1.10E-4 & 5.02E-4 & 2.79E-5 & 8.85E-5 & 1.00E-4 & 1.00E-4 & 1.00E-4\\
        & {\scriptsize $\pm$4.70E-6} & {\scriptsize $\pm$1.35E-5} & {\scriptsize $\pm$2.56E-5} & {\scriptsize $\pm$4.54E-4} & {\scriptsize $\pm$6.68E-6} & {\scriptsize $\pm$1.38E-5} & {\scriptsize $\pm$1.10E-6} &
        {\scriptsize $\pm$6.00E-8} &
        {\scriptsize $\pm$9.68E-8}
        \\
        \bottomrule
    \end{tabular}}
\end{table}

\subsubsection{Qualitative comparison}
\label{subsubsec:main_qual}

Figures~\ref{fig:pinv-qualitative} and~\ref{fig:nsinv-qualitative} show
representative reconstructions for PInv and NSInv. On both problems, CFM
produces visible structural error under mixed corruption. PG-CFM recovers the
overall structure more accurately, and PG-CFM-ERFM further reduces localized
artifacts, giving reconstructions that are visually closer to the reference
solution. The PINN-based baselines remain more sensitive to corrupted
observations. On both benchmarks, PINN shows visible bias, R-PINN reduces some artifacts, and B-PINN still exhibits noticeable artifacts. These
visual trends agree with the quantitative results in
Tables~\ref{tab:poisson_metrics} and~\ref{tab:ns_metrics}.

\begin{figure}[t]
\centering
\figpanel{0.235\linewidth}{0.16\textheight}{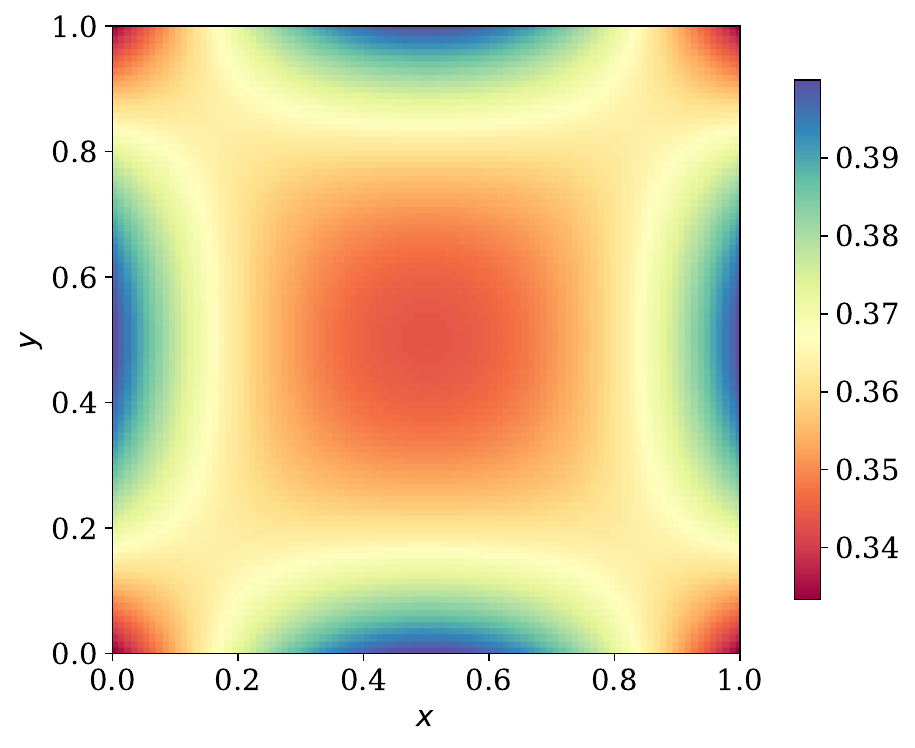}{(a) CFM}
\hfill
\figpanel{0.235\linewidth}{0.16\textheight}{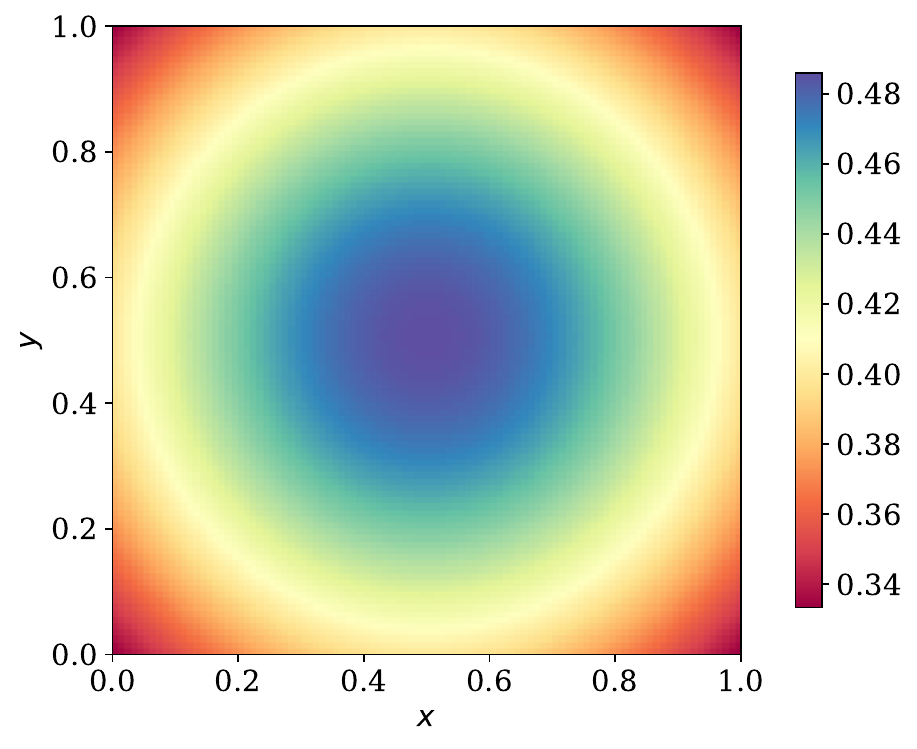}{(b) PG-CFM}
\hfill
\figpanel{0.235\linewidth}{0.16\textheight}{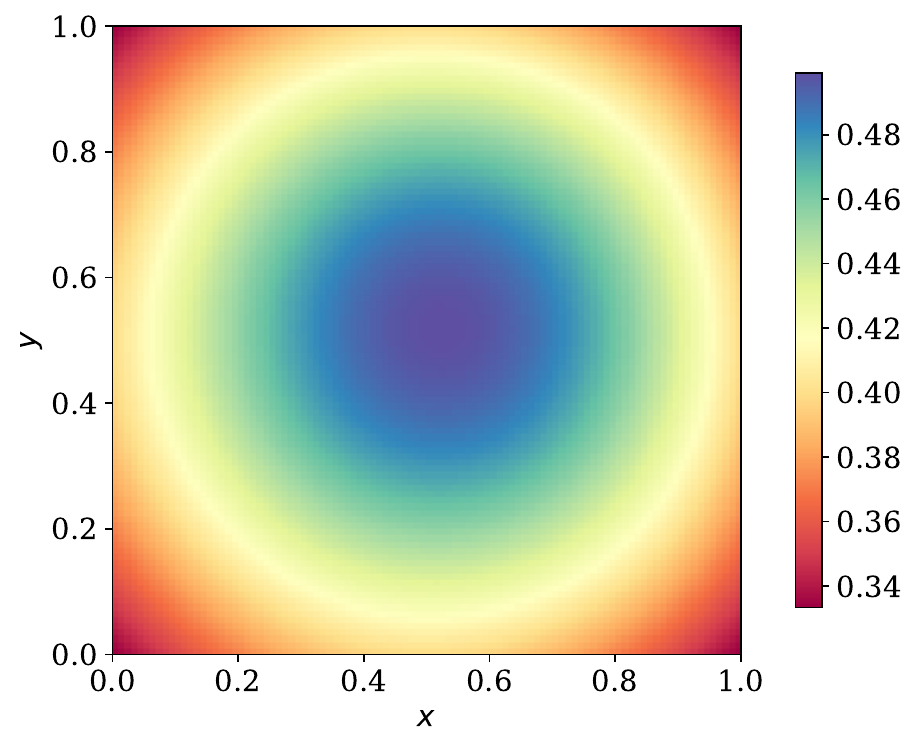}{(c) PG-CFM-ERFM}
\hfill
\figpanel{0.235\linewidth}{0.16\textheight}{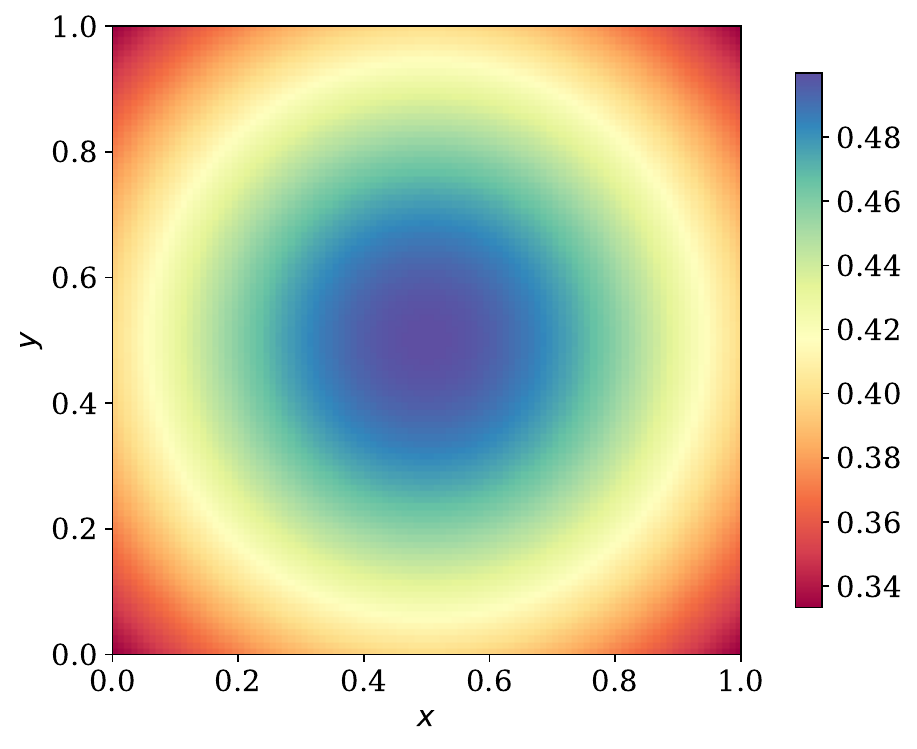}{(d) Ground truth}

\smallskip

\figpanel{0.235\linewidth}{0.16\textheight}{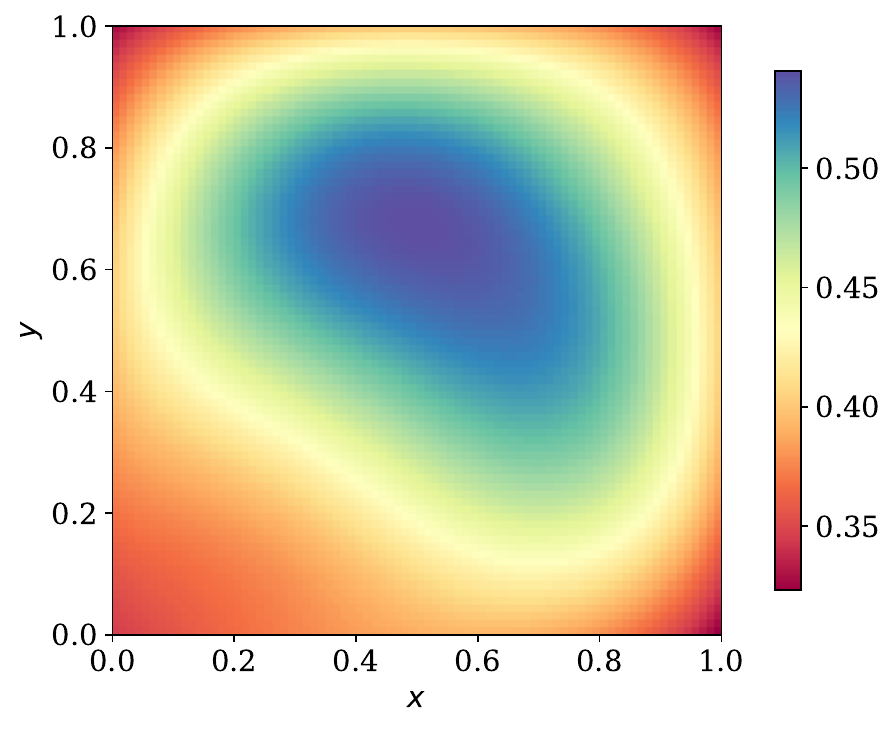}{(e) PINN}
\hfill
\figpanel{0.235\linewidth}{0.16\textheight}{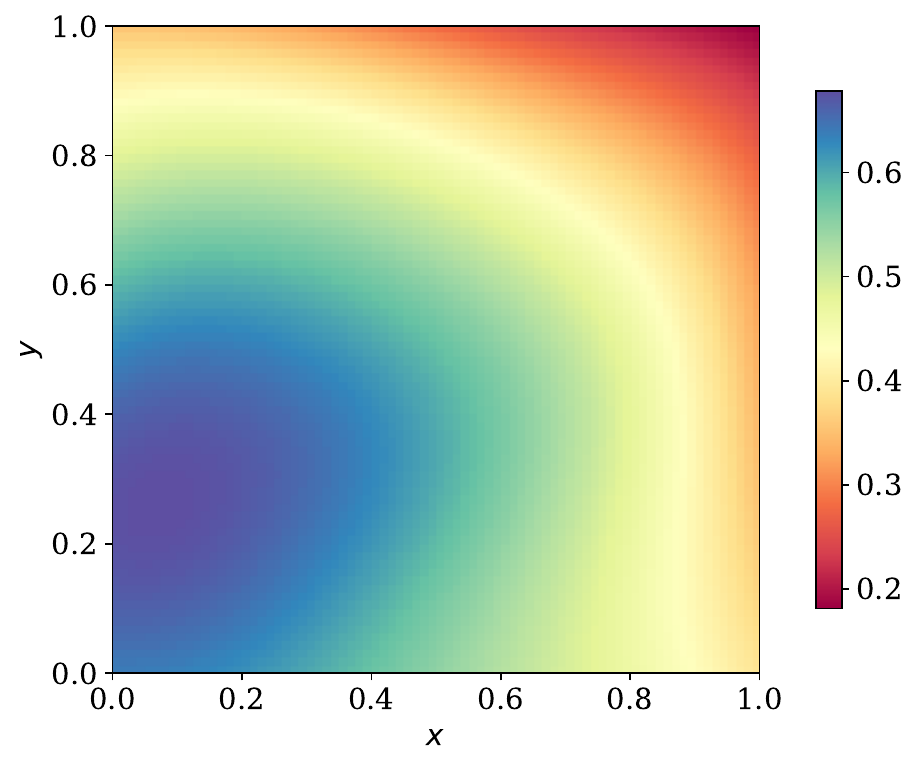}{(f) B-PINN}
\hfill
\figpanel{0.235\linewidth}{0.16\textheight}{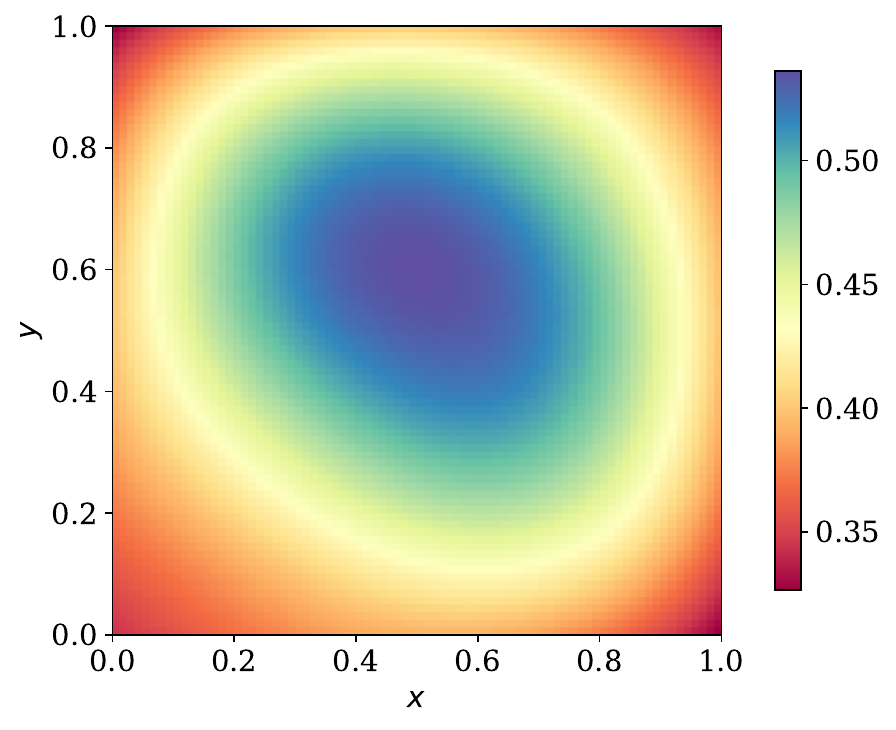}{(g) R-PINN}
\hfill
\figpanel{0.235\linewidth}{0.16\textheight}{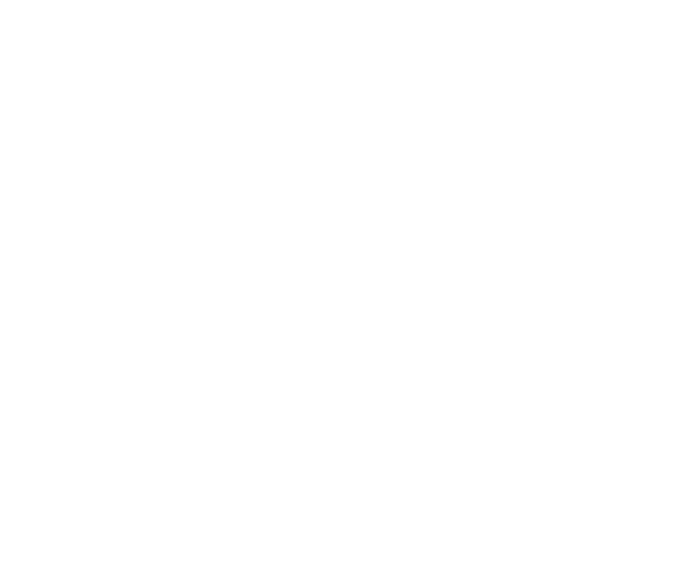}{\phantom{(h)}}

\caption{Representative reconstructions for the PInv coefficient field $a$. Panels (a)--(c) show the results of CFM, PG-CFM, and PG-CFM-ERFM, respectively; panel (d) shows the ground truth; panels (e)--(g) show PINN, B-PINN, and R-PINN.}
\label{fig:pinv-qualitative}
\end{figure}

\begin{figure}[t]
\centering
\figpanel{0.235\linewidth}{0.16\textheight}{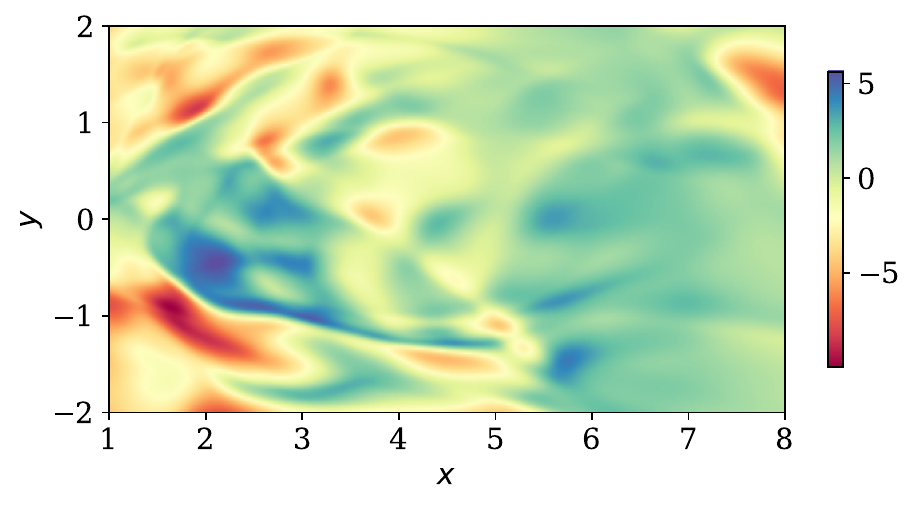}{(a) CFM}
\hfill
\figpanel{0.235\linewidth}{0.16\textheight}{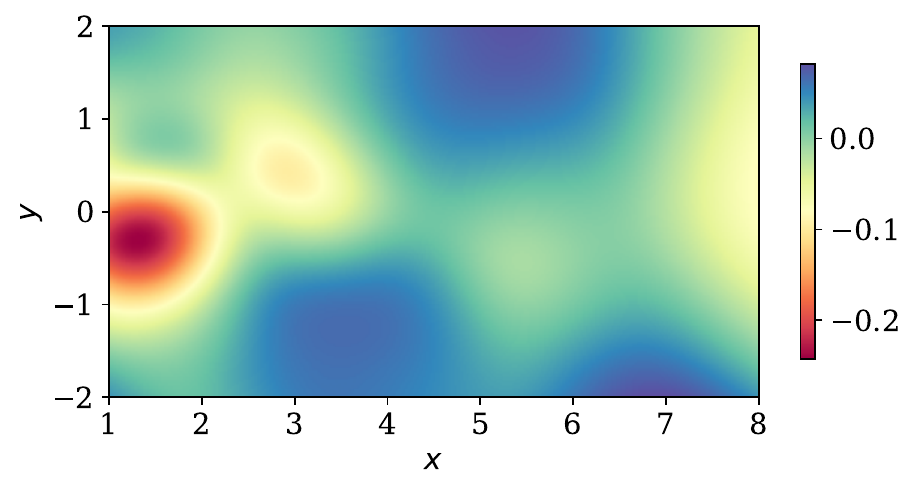}{(b) PG-CFM}
\hfill
\figpanel{0.235\linewidth}{0.16\textheight}{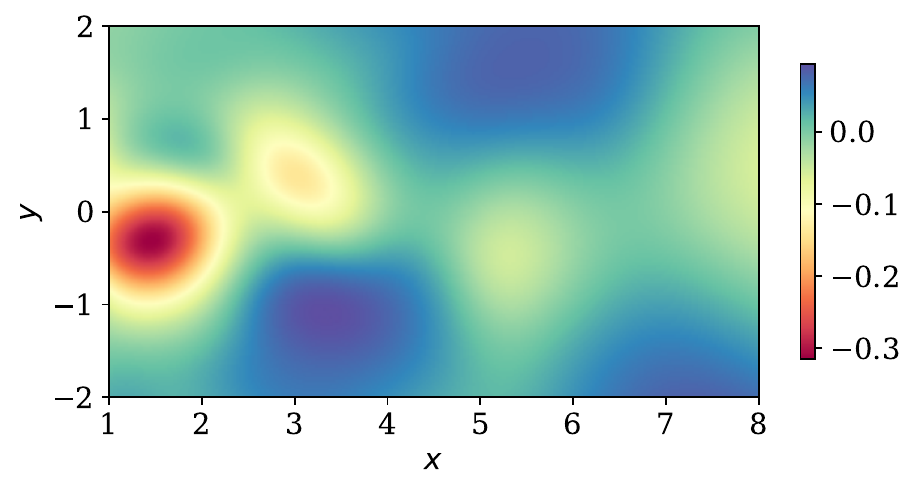}{(c) PG-CFM-ERFM}
\hfill
\figpanel{0.235\linewidth}{0.16\textheight}{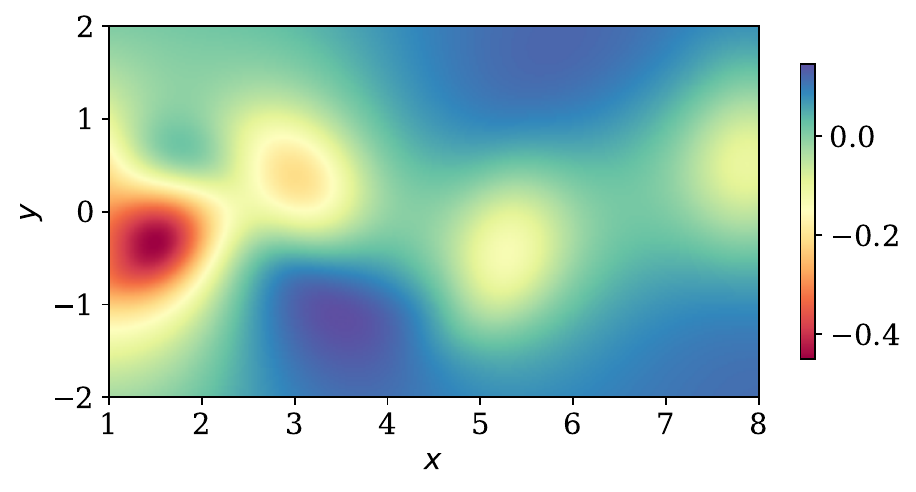}{(d) Ground truth}

\smallskip

\figpanel{0.235\linewidth}{0.16\textheight}{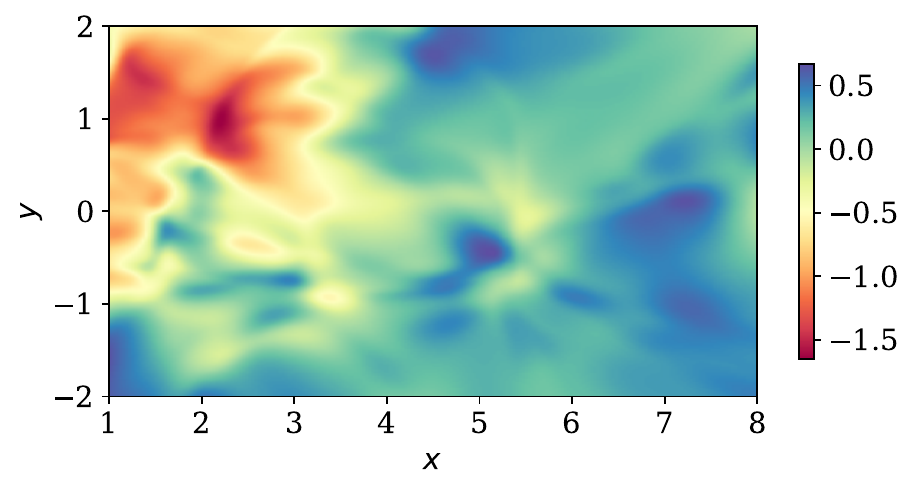}{(e) PINN}
\hfill
\figpanel{0.235\linewidth}{0.16\textheight}{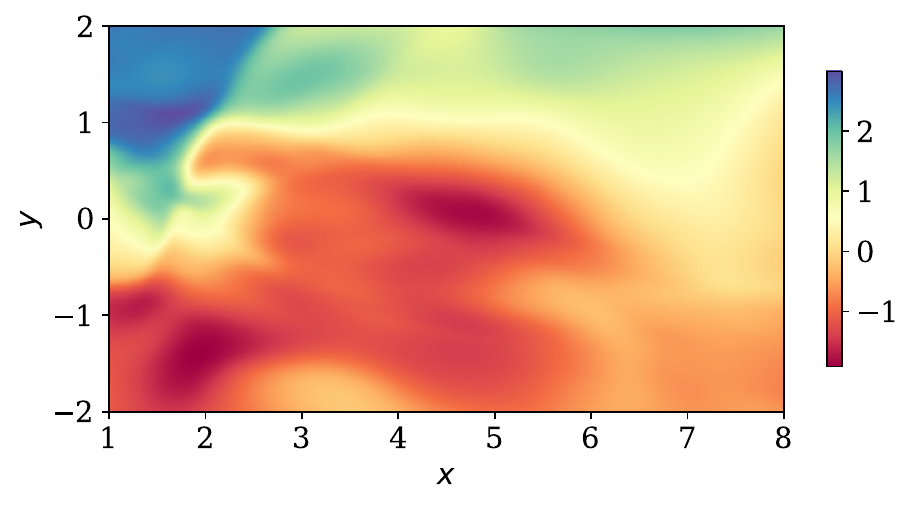}{(f) B-PINN}
\hfill
\figpanel{0.235\linewidth}{0.16\textheight}{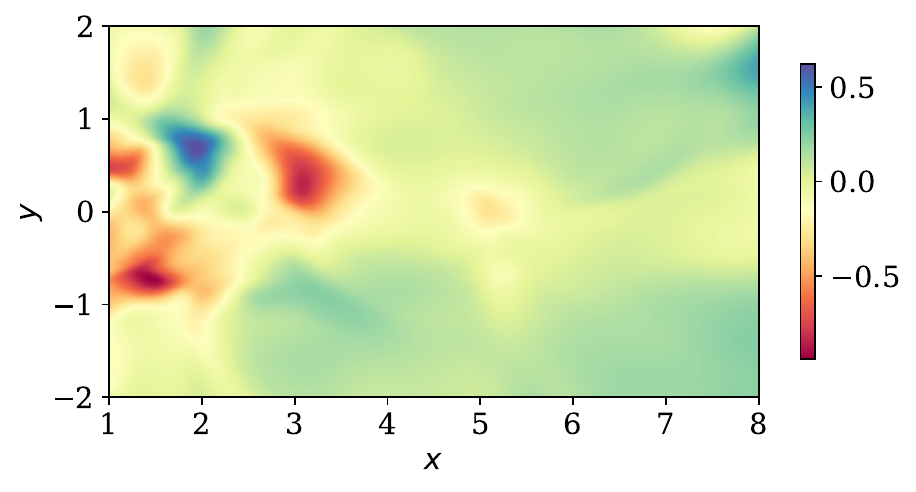}{(g) R-PINN}
\hfill
\figpanel{0.235\linewidth}{0.16\textheight}{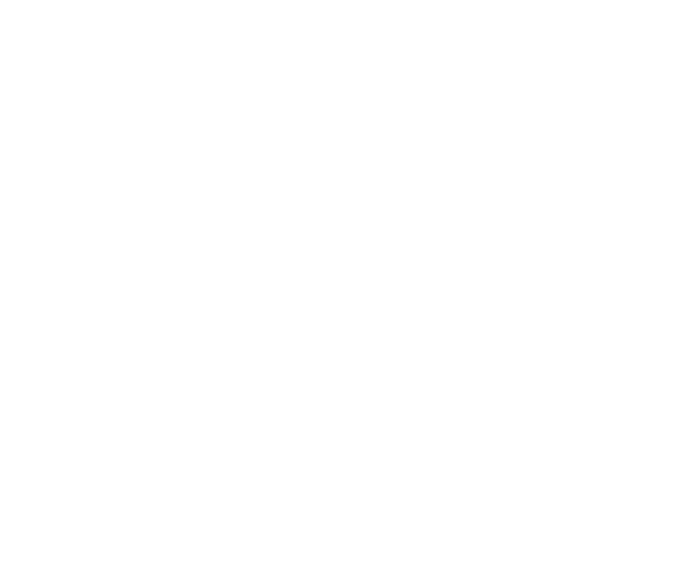}{\phantom{(h)}}

\caption{Representative reconstructions for the NSInv pressure field $p$. Panels (a)--(c) show the results of CFM, PG-CFM, and PG-CFM-ERFM, respectively; panel (d) shows the ground truth; panels (e)--(g) show PINN, B-PINN, and R-PINN.}
\label{fig:nsinv-qualitative}
\end{figure}

\subsection{Additional inverse problems and robustness}
\label{sec:additional-robustness}

\subsubsection{Additional inverse problems}
\label{subsubsec:additional-problems}

We also report results on HInv, BInv, and WInv; see
Tables~\ref{tab:hinv_metrics}--\ref{tab:winv_mse}. For these problems,
PG-CFM-ERFM gives lower reported errors than PG-CFM and CFM. It also improves
over the grid-based generative baselines in the reported field- or
parameter-recovery metrics.

On HInv, PG-CFM-ERFM gives the most accurate coefficient recovery across all reported metrics. Relative to PG-CFM, it reduces L2RE from $9.28\mathrm{E}{-}2$ to $5.17\mathrm{E}{-}2$ and MSE from $3.64\mathrm{E}{-}2$ to $1.13\mathrm{E}{-}2$. The gap relative to CFM is larger still, which suggests that purely data-driven transport is particularly sensitive to corruption. The grid-based generative baselines also perform substantially worse.

On BInv and WInv, where the inverse task is to recover scalar parameters, PG-CFM-ERFM again gives the lowest MSE and the smallest or near-smallest variance. The PIDM and PBFM baselines are less accurate, and the PINN-based baselines also remain behind the proposed method. Taken together, these results indicate that the combination of pathwise physics guidance and energy-based refinement remains effective across elliptic, parabolic, and hyperbolic inverse problems.

\begin{table}[t]
\centering
\caption{Mean $\pm$ standard deviation of error metrics for HInv coefficient recovery.}
\label{tab:hinv_metrics}
\setlength{\tabcolsep}{2pt}
\scalebox{0.8}{
\begin{tabular}{lccccccccc}
\toprule
Metric
& PG-CFM-ERFM
& PG-CFM
& CFM
& PINN
& R-PINN
& B-PINN
& PIDM-ME
& PIDM-SE
& PBFM \\
\midrule
L2RE
& \best{5.17E-2} & 9.28E-2 & 1.78E-1 & 1.31E-1 & 1.37E-1 & 1.92E-1 & 3.05E-1 & 4.65E-1 & 2.66E-1 \\
& {\scriptsize $\pm$4.73E-4} & {\scriptsize $\pm$1.33E-3} & {\scriptsize $\pm$2.30E-2}
& {\scriptsize $\pm$1.16E-2} & {\scriptsize $\pm$8.86E-3} & {\scriptsize $\pm$1.74E-2}
& {\scriptsize $\pm$1.51E-1} & {\scriptsize $\pm$1.00E-1} & {\scriptsize $\pm$3.00E-2} \\
\midrule
L1RE
& \best{3.95E-2} & 6.56E-2 & 1.45E-1 & 1.01E-1 & 1.08E-1 & 1.60E-1 & 2.52E-1 & 4.18E-1 & 2.17E-1 \\
& {\scriptsize $\pm$3.92E-4} & {\scriptsize $\pm$8.64E-4} & {\scriptsize $\pm$2.45E-2}
& {\scriptsize $\pm$1.06E-2} & {\scriptsize $\pm$7.89E-3} & {\scriptsize $\pm$1.63E-2}
& {\scriptsize $\pm$1.28E-1} & {\scriptsize $\pm$1.08E-1} & {\scriptsize $\pm$1.93E-2} \\
\midrule
MSE
& \best{1.13E-2} & 3.64E-2 & 1.37E-1 & 7.31E-2 & 7.96E-2 & 1.58E-1 & 4.92E-1 & 9.61E-1 & 3.04E-1 \\
& {\scriptsize $\pm$2.16E-4} & {\scriptsize $\pm$1.01E-3} & {\scriptsize $\pm$3.39E-2}
& {\scriptsize $\pm$1.25E-2} & {\scriptsize $\pm$1.05E-2} & {\scriptsize $\pm$2.80E-2}
& {\scriptsize $\pm$3.83E-1} & {\scriptsize $\pm$3.66E-1} & {\scriptsize $\pm$7.03E-2} \\
\midrule
MAE
& \best{7.88E-2} & 1.31E-1 & 1.33E+0 & 2.03E-1 & 2.17E-1 & 3.19E-1 & 5.05E-1 & 8.35E-1 & 1.23E+0 \\
& {\scriptsize $\pm$8.50E-4} & {\scriptsize $\pm$1.61E-3} & {\scriptsize $\pm$5.02E-1}
& {\scriptsize $\pm$2.12E-2} & {\scriptsize $\pm$1.58E-2} & {\scriptsize $\pm$3.26E-2}
& {\scriptsize $\pm$2.56E-1} & {\scriptsize $\pm$2.17E-1} & {\scriptsize $\pm$2.08E-1} \\
\bottomrule
\end{tabular}}
\end{table}

\begin{table}[t]
\centering
\caption{Mean $\pm$ standard deviation of MSE for identifying the viscosity $\nu$ on BInv.}
\label{tab:binv_mse}
\setlength{\tabcolsep}{2pt}
\scalebox{0.8}{
\begin{tabular}{lccccccccc}
\toprule
Method
& PG-CFM-ERFM
& PG-CFM
& CFM
& PINN
& R-PINN
& B-PINN
& PIDM-ME
& PIDM-SE
& PBFM \\
\midrule
MSE
& \best{3.63E-5} & 1.89E-4 & 3.96E-3 & 1.96E-4 & 2.21E-4 & 2.01E-4 & 3.72E-1 & 3.53E-1 & 1.31E-1 \\
& {\scriptsize $\pm$2.77E-5} & {\scriptsize $\pm$1.65E-4} & {\scriptsize $\pm$3.51E-3}
& {\scriptsize $\pm$1.74E-4} & {\scriptsize $\pm$1.63E-4} & {\scriptsize $\pm$1.85E-4}
& {\scriptsize $\pm$3.00E-4} & {\scriptsize $\pm$4.15E-3} & {\scriptsize $\pm$1.70E-1} \\
\bottomrule
\end{tabular}}
\end{table}

\begin{table}[t]
\centering
\caption{Mean $\pm$ standard deviation of MSE for identifying the wave speed $c$ on WInv.}
\label{tab:winv_mse}
\setlength{\tabcolsep}{2pt}
\scalebox{0.8}{
\begin{tabular}{lccccccccc}
\toprule
Method
& PG-CFM-ERFM
& PG-CFM
& CFM
& PINN
& R-PINN
& B-PINN
& PIDM-ME
& PIDM-SE
& PBFM \\
\midrule
MSE
& \best{5.47E-3} & 1.91E-2 & 1.45E+0 & 3.95E-2 & 1.00E-2 & 2.78E-2 & 4.91E-1 & 5.20E-1 & 7.46E-1 \\
& {\scriptsize $\pm$5.03E-3} & {\scriptsize $\pm$8.15E-3} & {\scriptsize $\pm$1.76E-2}
& {\scriptsize $\pm$2.03E-3} & {\scriptsize $\pm$5.36E-3} & {\scriptsize $\pm$2.35E-3}
& {\scriptsize $\pm$1.23E-3} & {\scriptsize $\pm$4.52E-3} & {\scriptsize $\pm$2.70E-1} \\
\bottomrule
\end{tabular}}
\end{table}

\subsubsection{Robustness under noise and corruption}
\label{subsubsec:robustness}

We next report robustness results under several corruption settings and
control comparisons. Unless otherwise stated, all results are shown as mean
$\pm$ standard deviation over three random seeds in terms of L2RE.

 Figure~\ref{fig:ablation-ab}(a) compares different placements of the physics
terms, including PIDM, PBFM, CFM, CFM-$\ell$,
CFM-g, and PG-CFM. In this
experiment, the local PDE weight is fixed at \(0.001\). PG-CFM gives the lowest error and the most stable performance across the tested settings, which indicates that bridge-local regularization and domain-wide collocation provide complementary information.  Figure~\ref{fig:ablation-ab}(b)
compares PG-CFM-ERFM with continued-training and label-refinement controls. The proposed method consistently outperforms these alternatives across PInv and NSInv, showing that the Stage-2 gain is not explained by additional optimization alone but by the energy-based reweighting itself.

\begin{figure}[t]
  \centering
  \figpanel{0.48\linewidth}{0.23\textheight}{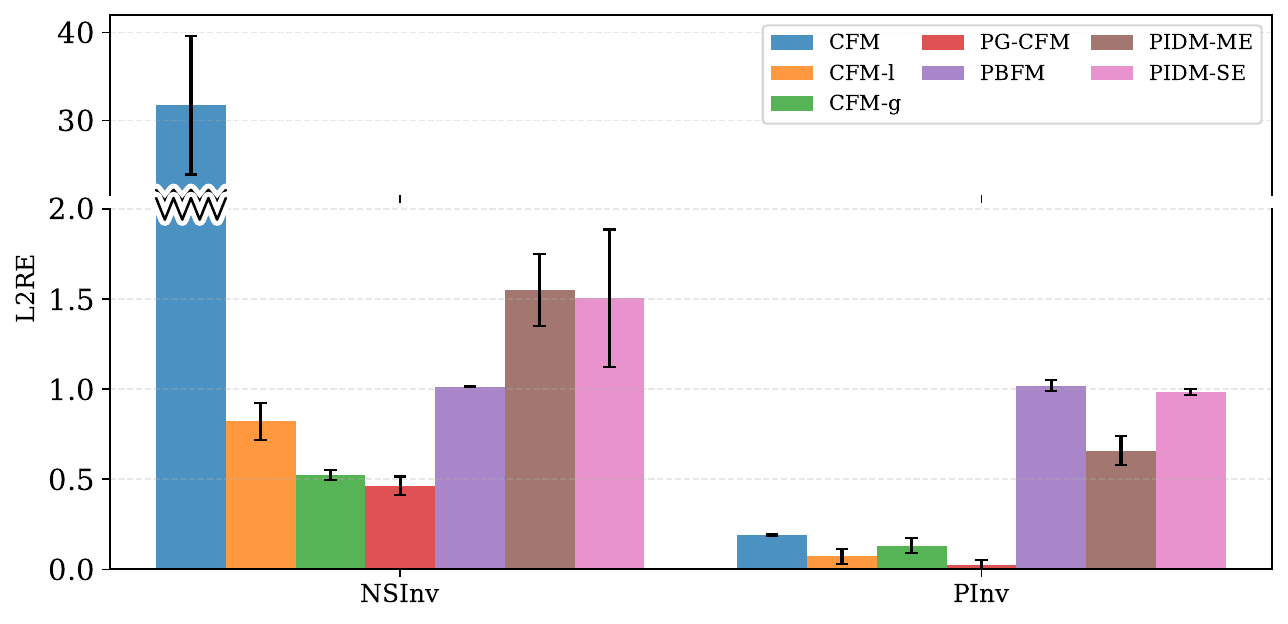}{(a) Physics placement comparison}
  \hfill
  \figpanel{0.48\linewidth}{0.23\textheight}{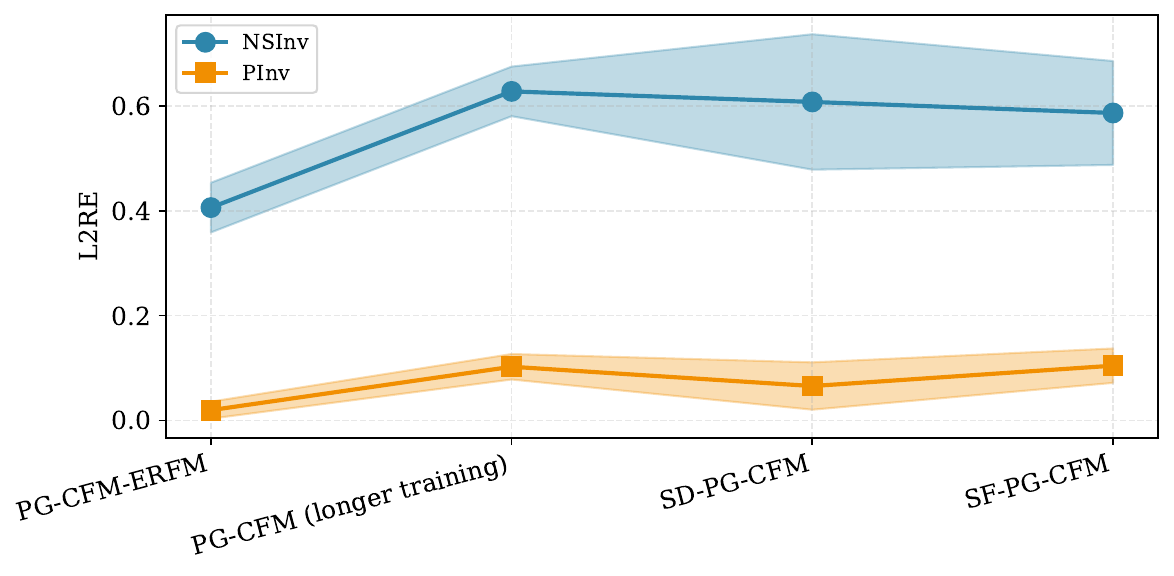}{(b) Compute-matched controls}
  \caption{Ablations A and B. Panel (a) compares different ways of incorporating physics into the flow model. Panel (b) compares ERFM with continued-training and label-refinement controls.}
  \label{fig:ablation-ab}
\end{figure}

Figures~\ref{fig:ablation-cd}(a) and~\ref{fig:ablation-cd}(b) show the effect
of varying the corruption ratio $\rho$ and the corrupted-noise scale
$\sigma_{\mathrm{bad}}$ on NSInv. In the corruption-ratio sweep,
PG-CFM-ERFM gives lower error than PG-CFM for $\rho\le 0.8$, while the gap
disappears at $\rho=1.0$, where all observations are corrupted. This is consistent with the fact that, in this extreme case, no clean observation subset remains, making it difficult for the Stage-1 energy to identify reliably trustworthy samples. In the
noise-scale sweep, PG-CFM-ERFM remains consistently more accurate than
PG-CFM over the tested range of $\sigma_{\mathrm{bad}}$. In both sweeps, the
PINN-based baselines have substantially larger errors and variability.

Finally, we evaluate robustness under different noise laws, including Gaussian, Laplacian, Student-$t$, uniform, and outlier-mixture noise. Figure~\ref{fig:ablation-e} shows that PG-CFM-ERFM is the most stable method across all tested distributions. In particular, the method remains stable under heavy-tailed Student-$t$ noise, where large outliers occur more frequently than under Gaussian corruption. These results indicate that the reweighting criterion is not restricted to the Gaussian corruption model used in the main comparisons.

\begin{figure}[t]
  \centering
  \figpanel{0.48\linewidth}{0.24\textheight}{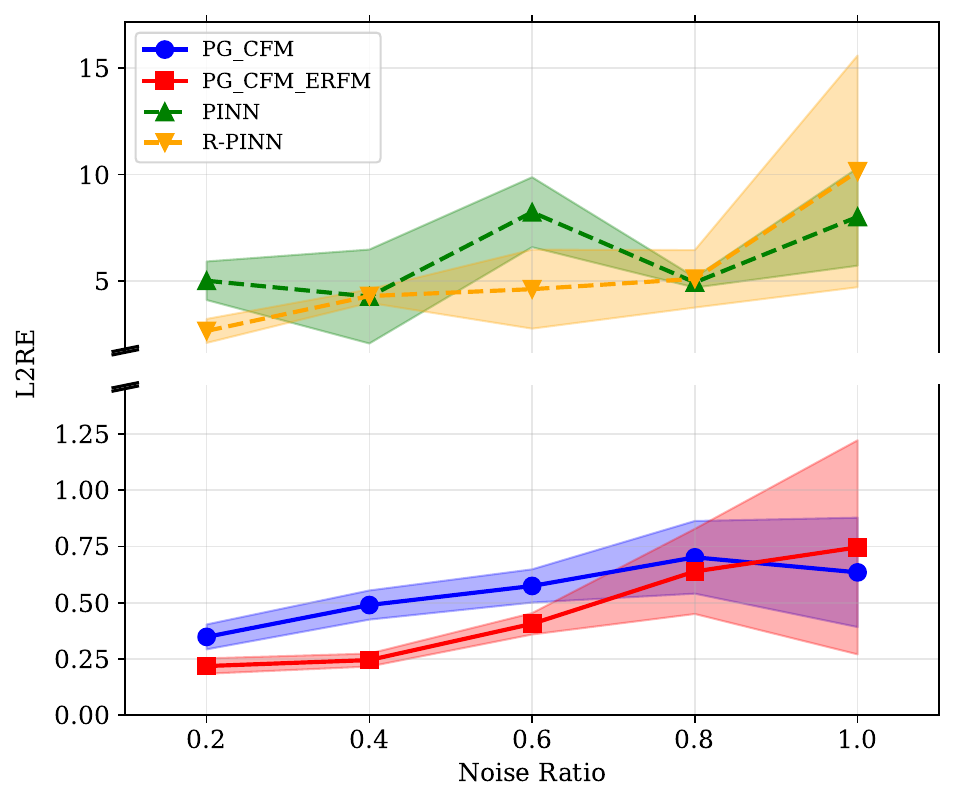}{(a) Outlier fraction sweep $\rho$}
  \hfill
  \figpanel{0.48\linewidth}{0.24\textheight}{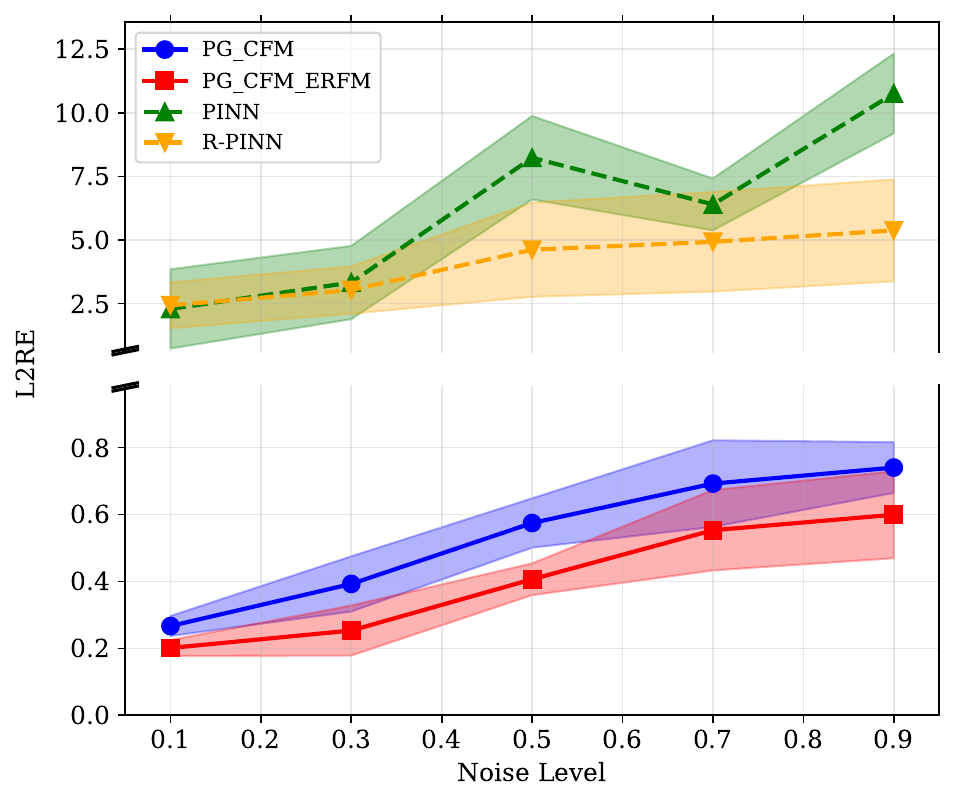}{(b) Outlier scale sweep $\sigma_{\mathrm{bad}}$}
  \caption{Ablations C and D: robustness to corruption magnitude on NSInv.}
  \label{fig:ablation-cd}
\end{figure}

\begin{figure}[t]
  \centering
  \includegraphics[width=0.6\linewidth]{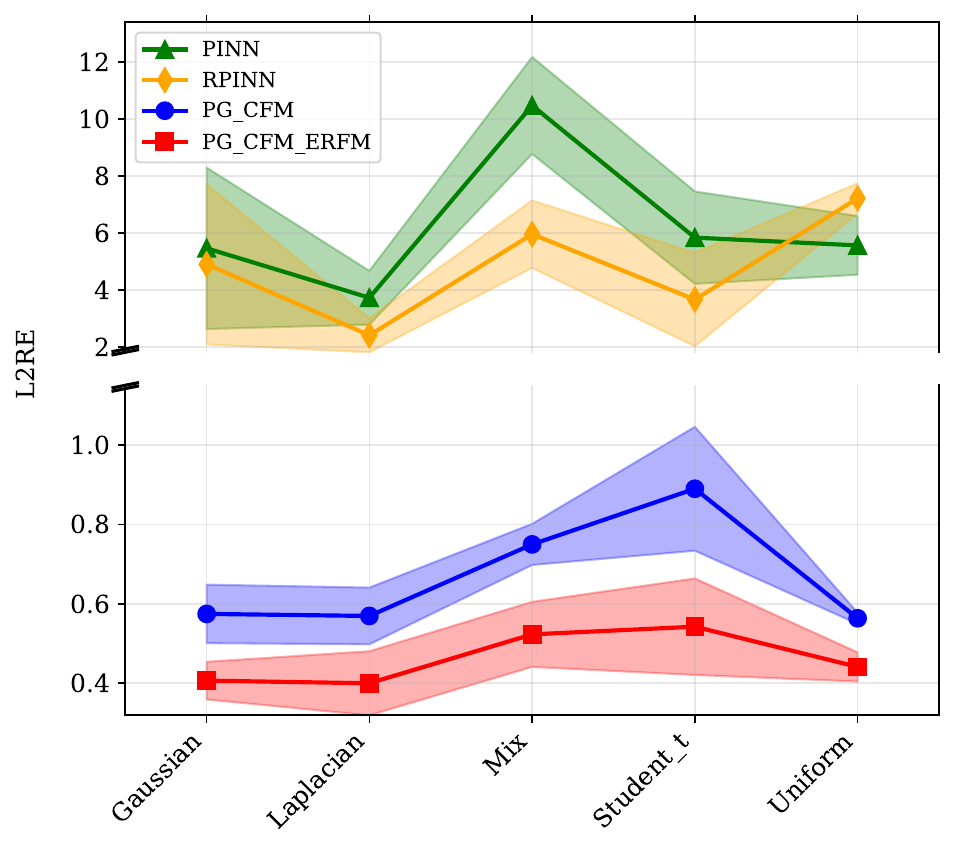}
  \caption{Ablation E: performance across different noise distributions on NSInv.}
  \label{fig:ablation-e}
\end{figure}

\subsection{Sensitivity and computational diagnostics}
\label{sec:sensitivity-diagnostics}

\subsubsection{Sensitivity to ERFM hyperparameters}
\label{subsubsec:erfm-hyper}

We examine the sensitivity of ERFM to its main hyperparameters, namely the inverse-temperature $\lambda$ in the weighting function $w(E)=\sigma(-\lambda E)$ and the threshold parameter $\kappa$ used in the robust energy normalization. Figure~\ref{fig:ablation-hyperparameter} reports
the NSInv results for sweeps of both parameters.

% The method remains stable across a broad range of settings. Smaller values of $\kappa$ tend to yield lower reconstruction error, which is consistent with a stricter robust normalization that suppresses high-energy observations more aggressively. As $\lambda$ increases from small values, the error decreases rapidly and then levels off. Overall, the
% results indicate that the second-stage reweighting is not highly sensitive to
% moderate changes in these parameters.

The method is reasonably stable over the tested range. Smaller values of
$\kappa$ tend to give lower reconstruction error, while the error decreases as
$\lambda$ increases from small values and then levels off. Overall, the
results indicate that the second-stage reweighting is not highly sensitive to
moderate changes in these parameters.

\begin{figure}[t]
\centering
\figpanel{0.48\linewidth}{0.21\textheight}{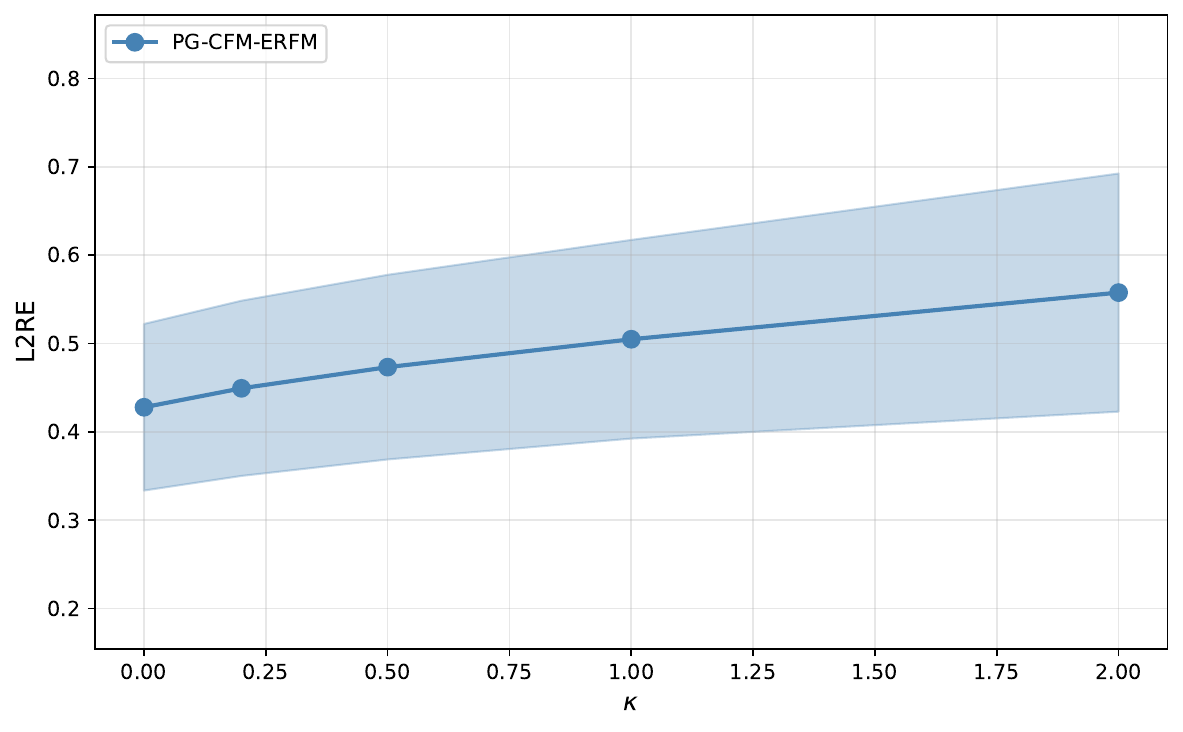}{(a) $\kappa$ sweep}
\hfill
\figpanel{0.48\linewidth}{0.21\textheight}{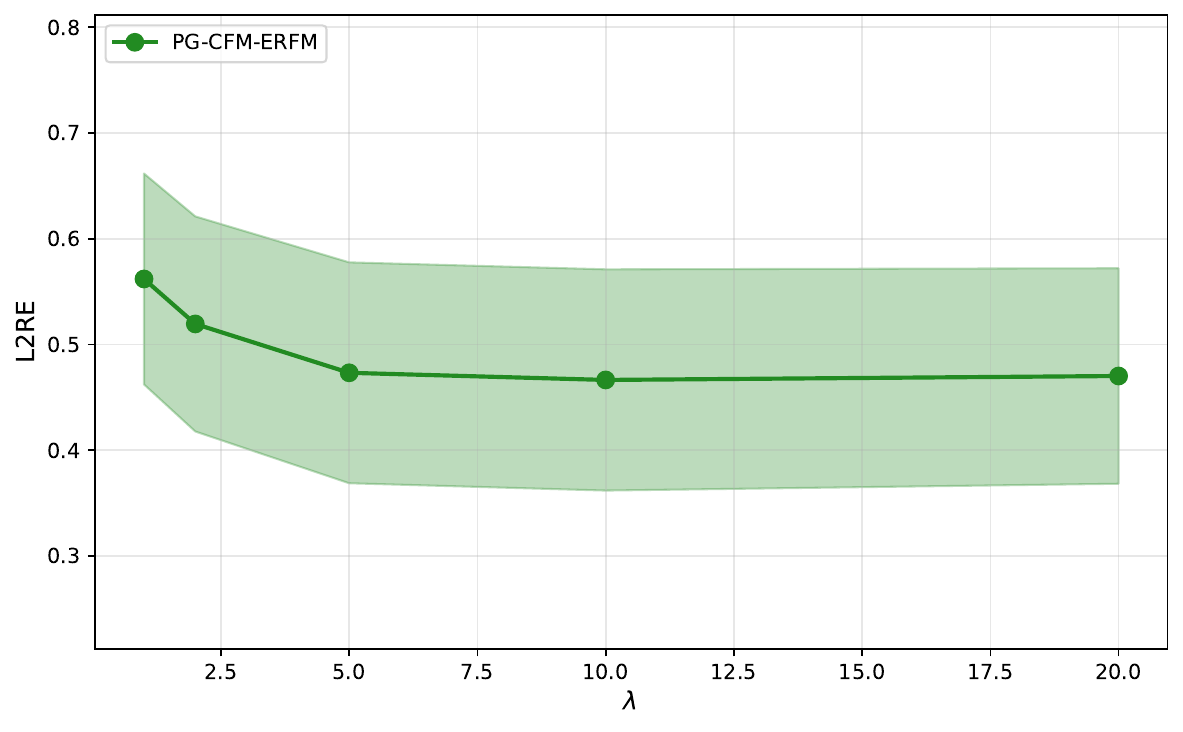}{(b) $\lambda$ sweep}
\caption{Sensitivity of ERFM to the hyperparameters $\kappa$ and $\lambda$ on NSInv. Reconstruction error (L2RE, mean $\pm$ standard deviation over three seeds) is reported for sweeps of the robust-normalization threshold $\kappa$ and the inverse-temperature $\lambda$.}
\label{fig:ablation-hyperparameter}
\end{figure}

\subsubsection{Sensitivity to numerical integration and residual evaluation}
\label{subsubsec:numerical-diagnostics}

We next study the sensitivity of the method to the numerical approximation used in the local and global physics losses. Figure~\ref{fig:ablation-e3} shows the effect of varying the Heun step counts $K_1$ and $K_2$ in NSInv. Across the tested values, PG-CFM-ERFM remains more accurate than PG-CFM, and
both methods show similar trends as the integration depth changes. This suggests that the gain from the second stage is not tied to a specific choice of Heun step count.

To further assess whether the Stage-2 weighting is physically meaningful, Figure~\ref{fig:ablation-energy-data} compares the ground-truth observation error with the energy score produced by the frozen Stage-1 model. The high-energy regions largely overlap with the strongly corrupted observations, which supports using the physics--data energy as a sample-reweighting criterion in this experiment. Since the score is computed from the frozen Stage-1 model, it combines
observation misfit and PDE residual information, providing a joint diagnostic
of data discrepancy and physics inconsistency.

\begin{figure}[t]
\centering
\figpanel{0.48\linewidth}{0.21\textheight}{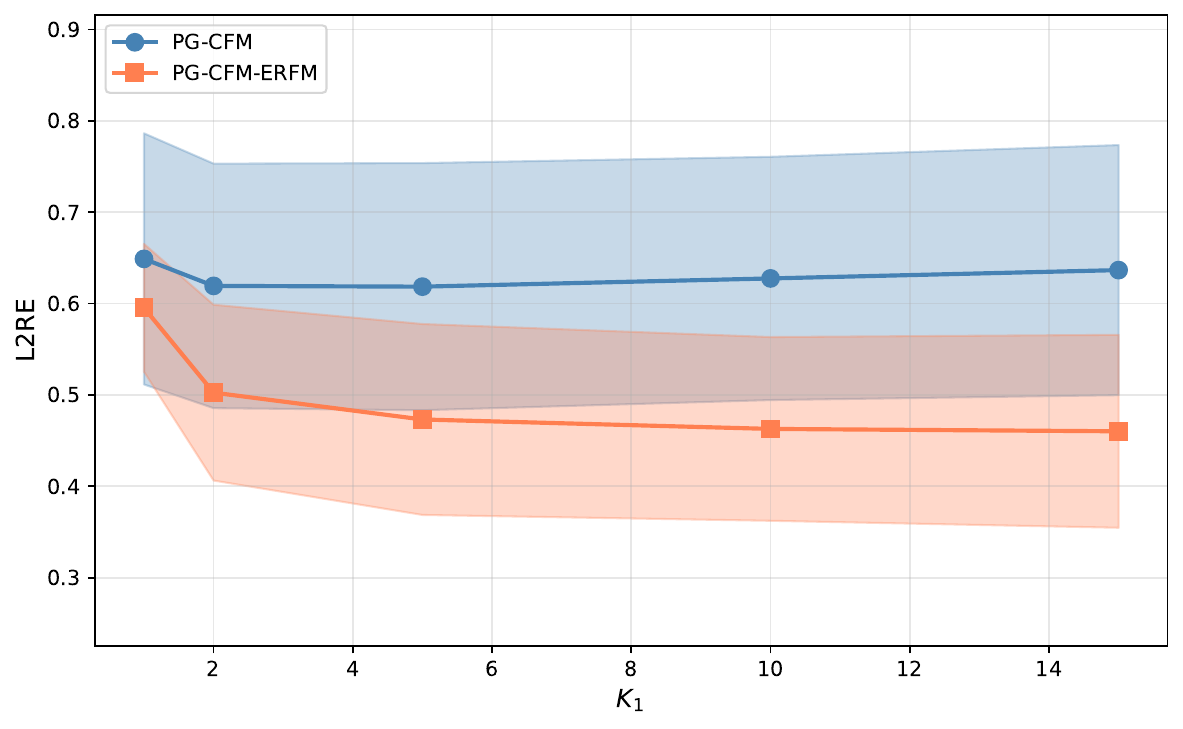}{(a) $K_1$ sweep}
\hfill
\figpanel{0.48\linewidth}{0.21\textheight}{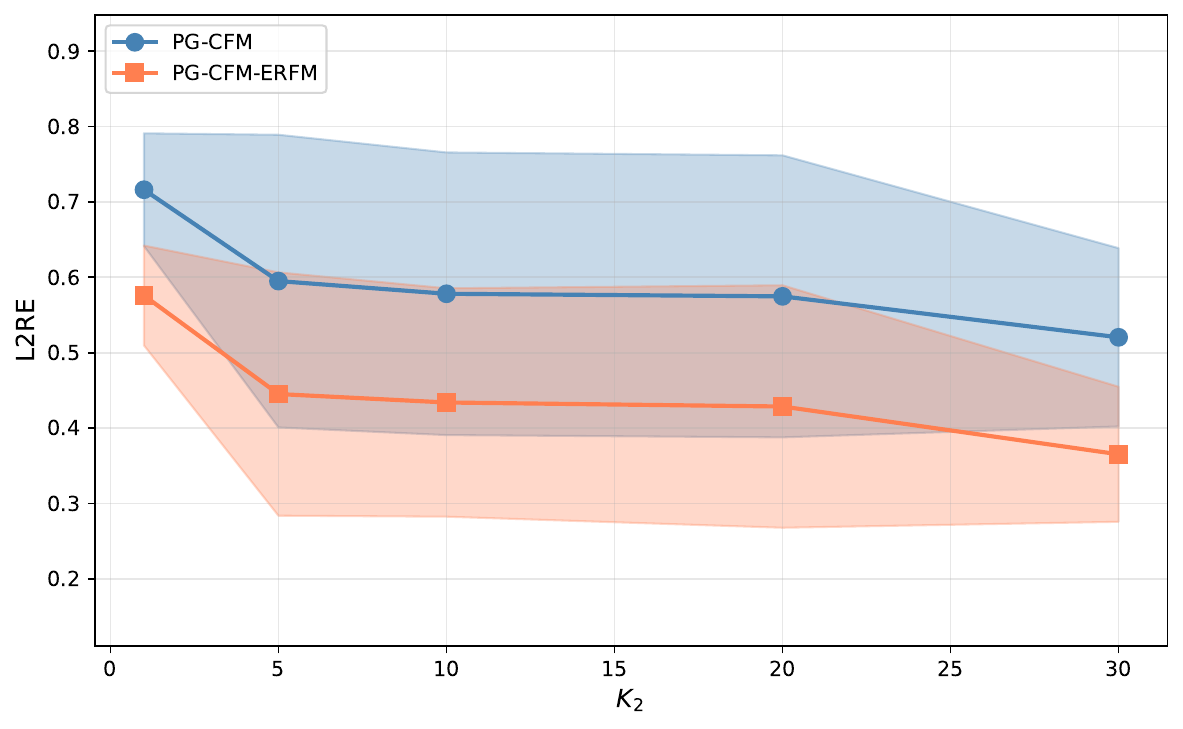}{(b) $K_2$ sweep}
\caption{Sensitivity to the Heun step counts used in the local and global physics losses on NSInv. L2RE (mean $\pm$ standard deviation) is shown as a function of $K_1$ and $K_2$.}
\label{fig:ablation-e3}
\end{figure}

\begin{figure}[t]
\centering
\figpanel{0.48\linewidth}{0.21\textheight}{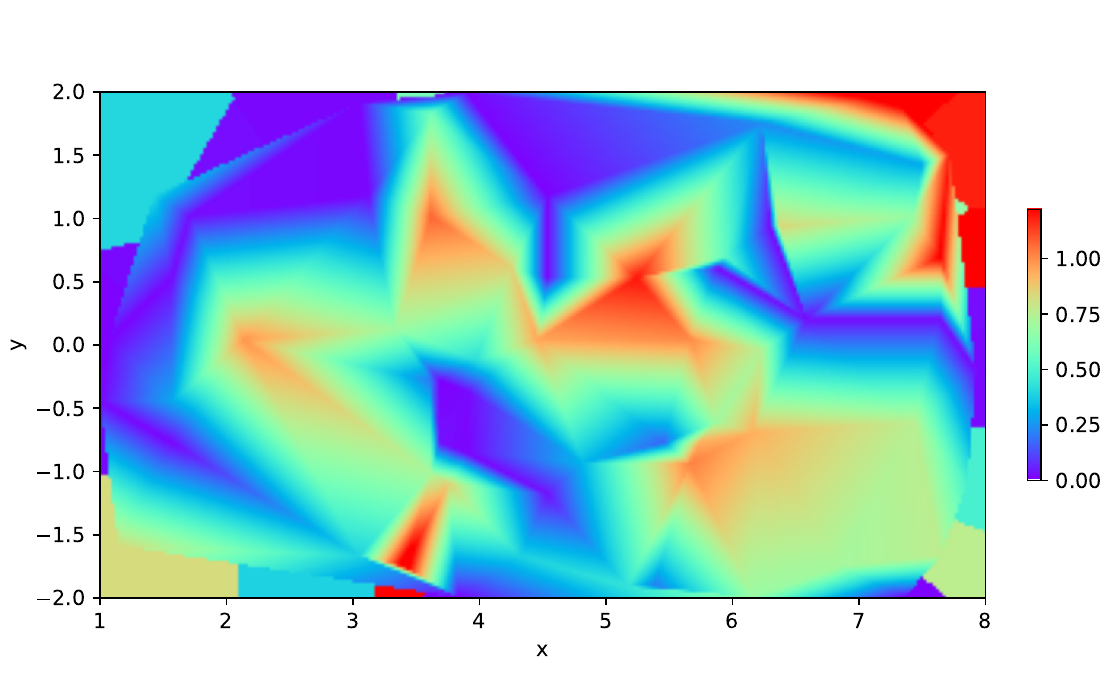}{(a) Ground-truth observation error $|\varepsilon|$}
\hfill
\figpanel{0.48\linewidth}{0.21\textheight}{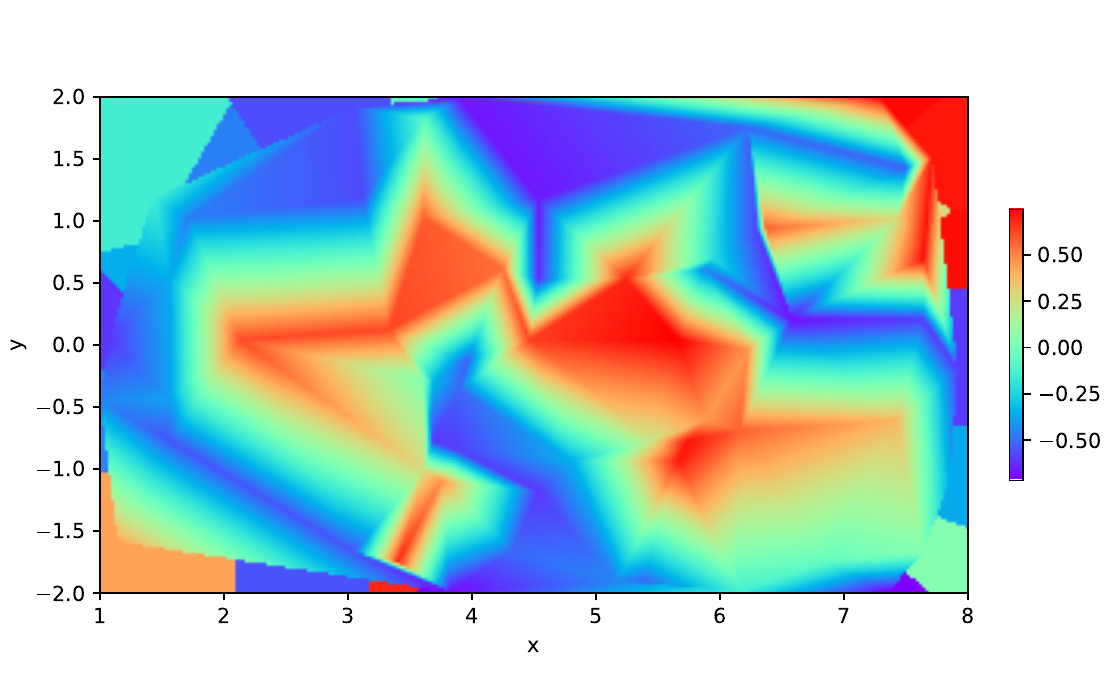}{(b) Stage-1 energy score $E$}
\caption{Comparison between observation error and Stage-1 energy scores on NSInv. High-energy regions align with strongly corrupted observations.}
\label{fig:ablation-energy-data}
\end{figure}

\subsubsection{Computational cost}
\label{subsubsec:cost}

\begin{table}[t]
\centering
\caption{NSInv: runtime and peak GPU memory of representative learned methods on an NVIDIA RTX 3090 Ti GPU. Reported training times correspond to the cost required to reach the final result for each method rather than to a common epoch or step budget.}
\label{tab:cost_pinn_vs_pgcfm_erfm}
\setlength{\tabcolsep}{3.5pt}
\small
\begin{tabular}{l|cccc}
\toprule
\multirow{2}{*}{Method} & \multicolumn{2}{c|}{Time (s)} & \multicolumn{2}{c}{Peak GPU memory (MB)} \\
\cmidrule(lr){2-3} \cmidrule(lr){4-5}
& Train & Infer & Train & Infer \\
\midrule
PG-CFM-ERFM      & 1.32E+3 & 4.24E-3 & 1.44E+3 & 1.91E+2 \\
PG-CFM    & 1.01E+3 & 4.13E-3 & 1.44E+3 & 1.57E+2 \\
CFM       & 1.97E+1 & 4.84E-3 & 1.24E+2 & 3.32E+2 \\
PINN      & 1.20E+3 & 5.12E-3 & 1.56E+2 & 2.86E+2 \\
R-PINN    & 8.48E+2 & 5.09E-3 & 1.56E+2 & 2.86E+2 \\
B-PINN    & 6.63E+2 & 2.36E-1 & 1.64E+2 & 4.88E+2 \\
PBFM      & 2.16E+2 & 9.66E-3 & 1.08E+2 & 1.08E+2 \\
PIDM-ME   & 6.72E+1 & 1.20E-1 & 2.63E+3 & 1.03E+3 \\
PIDM-SE   & 5.41E+1 & 7.43E-2 & 2.65E+3 & 1.04E+3 \\
\bottomrule
\end{tabular}
\end{table}

% Table~\ref{tab:cost_pinn_vs_pgcfm_erfm} reports the runtime and peak GPU
% memory on NSInv. The reported training times correspond to the cost required
% for each method to reach its final result, rather than to a common epoch or
% step budget. Under this comparison, PG-CFM-ERFM is more expensive than CFM
% and several lightweight baselines, but remains broadly comparable in training
% time to the PINN-based methods while giving lower reconstruction error under
% heavy corruption.

% ERFM adds only a moderate overhead over PG-CFM, which is consistent with its
% role as a short Stage-2 refinement rather than a second full training
% procedure. By contrast, CFM is computationally cheap but much less accurate on
% NSInv. Relative to PINN, R-PINN, and B-PINN, PG-CFM-ERFM has a comparable
% order of training cost while achieving lower reconstruction error.

% The grid-based generative baselines PBFM and PIDM also require less training
% time than PG-CFM-ERFM, but are less accurate in this sparse, noisy, off-grid
% setting. In addition, the PIDM variants use substantially more GPU memory.
% Overall, the proposed two-stage method does not have the lowest computational
% cost, but remains competitive in accuracy among the more effective baselines.

Table~\ref{tab:cost_pinn_vs_pgcfm_erfm} reports the runtime and peak GPU memory on NSInv. The reported training times correspond to the cost required for each method to reach its final result, rather than to a common epoch or step budget. PG-CFM-ERFM is more expensive than CFM and the grid-based generative baselines, but its training time is of the same order as the PINN-based methods. ERFM adds only a moderate overhead over PG-CFM, which is consistent with its
role as a short Stage-2 refinement rather than a second full training
procedure. 

These results should be interpreted together with Tables~\ref{tab:ns_metrics} and~\ref{tab:nsinv_beta_mse}. CFM and the grid-based generative baselines are cheaper in this experiment but give larger reconstruction or coefficient errors under the tested corrupted observations, while PG-CFM-ERFM trades additional training cost for improved inverse recovery.

\section{Conclusion}
\label{sec:conclusion}

We developed a two-stage, mesh-free flow-matching method for PDE inverse problems from sparse, noisy, and corrupted observations. The first stage, PG-CFM, introduces physics-guided conditional flow matching by combining observation-conditioned transport with strong-form residual penalties along short solver-integrated trajectories and intermittent global collocation constraints. The second stage, ERFM, uses a frozen Stage-1 teacher to construct physics--data energy scores and reweights the observation-conditioned flow-matching loss to reduce the influence of PDE-inconsistent samples. We showed that ERFM is equivalent to observation-conditioned flow matching under a teacher-induced reweighted empirical distribution, which explains Stage~2 as a distributional reweighting step rather than a full retraining procedure.

Across five benchmarks with heterogeneous corruption and heavy-tailed noise, PG-CFM-ERFM consistently improves field and coefficient recovery over PINN-based baselines, standard CFM, and competing generative baselines. The ablation studies further show that the improvement from ERFM cannot be attributed to additional optimization alone, and that bridge-local and global physics losses play complementary roles in the Stage-1 model.

The current formulation has two main limitations. First, the physics-guided losses rely on fixed-step explicit Heun integration, which is efficient but may become unreliable in stiff regimes. Second, the effectiveness of ERFM depends on the ability of the Stage-1 teacher to produce informative energy scores; under extremely severe corruption, the separation between relatively reliable and corrupted observations may deteriorate. Future work will consider implicit or adaptive integrators for the pathwise physics losses, more complex boundary and initial operators and multi-physics couplings, and larger-scale evaluations in higher-dimensional or experimental settings with stronger model mismatch and structured corruption.

\FloatBarrier
\bibliographystyle{siamplain}
\bibliography{main}

\begin{thebibliography}{10}

\bibitem{arridge2019solving}
{\sc S.~Arridge, P.~Maass, O.~{\"O}ktem, and C.-B. Sch{\"o}nlieb}, {\em Solving inverse problems using data-driven models}, Acta Numerica, 28 (2019), pp.~1--174.

\bibitem{baldan2025flow}
{\sc G.~Baldan, Q.~Liu, A.~Guardone, and N.~Thuerey}, {\em Flow matching meets pdes: A unified framework for physics-constrained generation}, arXiv preprint arXiv:2506.08604,  (2025).

\bibitem{bastek2025physics}
{\sc J.-H. Bastek, W.~Sun, and D.~Kochmann}, {\em Physics-informed diffusion models}, in The Thirteenth International Conference on Learning Representations, 2025.

\bibitem{basu2021influence}
{\sc S.~Basu, P.~Pope, and S.~Feizi}, {\em Influence functions in deep learning are fragile}, in International Conference on Learning Representations, 2021.

\bibitem{baydin2018automatic}
{\sc A.~G. Baydin, B.~A. Pearlmutter, A.~A. Radul, and J.~M. Siskind}, {\em Automatic differentiation in machine learning: a survey}, Journal of machine learning research, 18 (2018), pp.~1--43.

\bibitem{ben2024d}
{\sc H.~Ben-Hamu, O.~Puny, I.~Gat, B.~Karrer, U.~Singer, and Y.~Lipman}, {\em D-flow: Differentiating through flows for controlled generation}, in Forty-first International Conference on Machine Learning, 2024.

\bibitem{benning2018modern}
{\sc M.~Benning and M.~Burger}, {\em Modern regularization methods for inverse problems}, Acta numerica, 27 (2018), pp.~1--111.

\bibitem{chen2018neural}
{\sc R.~T. Chen, Y.~Rubanova, J.~Bettencourt, and D.~K. Duvenaud}, {\em Neural ordinary differential equations}, Advances in neural information processing systems, 31 (2018).

\bibitem{cheng2025gradientfree}
{\sc C.~Cheng, B.~Han, D.~C. Maddix, A.~F. Ansari, A.~Stuart, M.~W. Mahoney, and B.~Wang}, {\em Gradient-free generation for hard-constrained systems}, in The Thirteenth International Conference on Learning Representations, 2025.

\bibitem{chung2023diffusion}
{\sc H.~Chung, J.~Kim, M.~T. Mccann, M.~L. Klasky, and J.~C. Ye}, {\em Diffusion posterior sampling for general noisy inverse problems}, in The Eleventh International Conference on Learning Representations, 2023.

\bibitem{dhariwal2021diffusion}
{\sc P.~Dhariwal and A.~Nichol}, {\em Diffusion models beat gans on image synthesis}, Advances in neural information processing systems, 34 (2021), pp.~8780--8794.

\bibitem{duan2025copinn}
{\sc S.~Duan, W.~Wu, P.~Hu, Z.~Ren, D.~Peng, and Y.~Sun}, {\em Co{PINN}: Cognitive physics-informed neural networks}, in Forty-second International Conference on Machine Learning, 2025.

\bibitem{ghosh2017robust}
{\sc A.~Ghosh, H.~Kumar, and P.~S. Sastry}, {\em Robust loss functions under label noise for deep neural networks}, in Proceedings of the AAAI conference on artificial intelligence, vol.~31, 2017.

\bibitem{ho2020denoising}
{\sc J.~Ho, A.~Jain, and P.~Abbeel}, {\em Denoising diffusion probabilistic models}, Advances in neural information processing systems, 33 (2020), pp.~6840--6851.

\bibitem{huang2024diffusionpde}
{\sc J.~Huang, G.~Yang, Z.~Wang, and J.~J. Park}, {\em Diffusion{PDE}: Generative {PDE}-solving under partial observation}, in The Thirty-eighth Annual Conference on Neural Information Processing Systems, 2024.

\bibitem{isakov2017inverse}
{\sc V.~Isakov}, {\em Inverse Problems for Partial Differential Equations}, vol.~127, Springer, 2017.

\bibitem{karniadakis2021physics}
{\sc G.~E. Karniadakis, I.~G. Kevrekidis, L.~Lu, P.~Perdikaris, S.~Wang, and L.~Yang}, {\em Physics-informed machine learning}, Nature Reviews Physics, 3 (2021), pp.~422--440.

\bibitem{li2025videopde}
{\sc E.~Li, Z.~Wang, J.~Huang, and J.~J. Park}, {\em Videopde: Unified generative pde solving via video inpainting diffusion models}, arXiv preprint arXiv:2506.13754,  (2025).

\bibitem{lipman2023flow}
{\sc Y.~Lipman, R.~T.~Q. Chen, H.~Ben-Hamu, M.~Nickel, and M.~Le}, {\em Flow matching for generative modeling}, in The Eleventh International Conference on Learning Representations, 2023.

\bibitem{liu2025rethinking}
{\sc S.~Liu, Y.~Yao, J.~Jia, S.~Casper, N.~Baracaldo, P.~Hase, Y.~Yao, C.~Y. Liu, X.~Xu, H.~Li, et~al.}, {\em Rethinking machine unlearning for large language models}, Nature Machine Intelligence,  (2025), pp.~1--14.

\bibitem{liu2025threats}
{\sc Z.~Liu, H.~Ye, C.~Chen, Y.~Zheng, and K.-Y. Lam}, {\em Threats, attacks, and defenses in machine unlearning: A survey}, IEEE Open Journal of the Computer Society,  (2025).

\bibitem{meyer2021alternative}
{\sc G.~P. Meyer}, {\em An alternative probabilistic interpretation of the huber loss}, in Proceedings of the ieee/cvf conference on computer vision and pattern recognition, 2021, pp.~5261--5269.

\bibitem{pruthi2020estimating}
{\sc G.~Pruthi, F.~Liu, S.~Kale, and M.~Sundararajan}, {\em Estimating training data influence by tracing gradient descent}, Advances in Neural Information Processing Systems, 33 (2020), pp.~19920--19930.

\bibitem{raissi2019physics}
{\sc M.~Raissi, P.~Perdikaris, and G.~E. Karniadakis}, {\em Physics-informed neural networks: A deep learning framework for solving forward and inverse problems involving nonlinear partial differential equations}, Journal of Computational physics, 378 (2019), pp.~686--707.

\bibitem{ren2018learning}
{\sc M.~Ren, W.~Zeng, B.~Yang, and R.~Urtasun}, {\em Learning to reweight examples for robust deep learning}, in International conference on machine learning, PMLR, 2018, pp.~4334--4343.

\bibitem{simone2025continual}
{\sc L.~Simone, D.~Bacciu, and S.~Ma}, {\em Continualflow: Learning and unlearning with neural flow matching}, in ICML 2025 Workshop on Machine Unlearning for Generative AI, 2025.

\bibitem{song2022learning}
{\sc H.~Song, M.~Kim, D.~Park, Y.~Shin, and J.-G. Lee}, {\em Learning from noisy labels with deep neural networks: A survey}, IEEE transactions on neural networks and learning systems, 34 (2022), pp.~8135--8153.

\bibitem{song2021scorebased}
{\sc Y.~Song, J.~Sohl-Dickstein, D.~P. Kingma, A.~Kumar, S.~Ermon, and B.~Poole}, {\em Score-based generative modeling through stochastic differential equations}, in International Conference on Learning Representations, 2021.

\bibitem{tauberschmidt2025physics}
{\sc J.~Tauberschmidt, S.~Fellenz, S.~J. Vollmer, and A.~B. Duncan}, {\em Physics-constrained fine-tuning of flow-matching models for generation and inverse problems}, arXiv preprint arXiv:2508.09156,  (2025).

\bibitem{utkarsh2025physics}
{\sc U.~Utkarsh, P.~Cai, A.~Edelman, R.~Gomez-Bombarelli, and C.~V. Rackauckas}, {\em Physics-constrained flow matching: Sampling generative models with hard constraints}, in The Thirty-ninth Annual Conference on Neural Information Processing Systems, 2025.

\bibitem{wang2021understanding}
{\sc S.~Wang, Y.~Teng, and P.~Perdikaris}, {\em Understanding and mitigating gradient flow pathologies in physics-informed neural networks}, SIAM Journal on Scientific Computing, 43 (2021), pp.~A3055--A3081.

\bibitem{xia2022sample}
{\sc X.~Xia, T.~Liu, B.~Han, M.~Gong, J.~Yu, G.~Niu, and M.~Sugiyama}, {\em Sample selection with uncertainty of losses for learning with noisy labels}, in International Conference on Learning Representations, 2022.

\bibitem{xiang2022self}
{\sc Z.~Xiang, W.~Peng, X.~Liu, and W.~Yao}, {\em Self-adaptive loss balanced physics-informed neural networks}, Neurocomputing, 496 (2022), pp.~11--34.

\bibitem{yang2021b}
{\sc L.~Yang, X.~Meng, and G.~E. Karniadakis}, {\em B-pinns: Bayesian physics-informed neural networks for forward and inverse pde problems with noisy data}, Journal of Computational Physics, 425 (2021), p.~109913.

\bibitem{yao2025guided}
{\sc J.~Yao, A.~Mammadov, J.~Berner, G.~Kerrigan, J.~C. Ye, K.~Azizzadenesheli, and A.~Anandkumar}, {\em Guided diffusion sampling on function spaces with applications to {PDE}s}, in The Thirty-ninth Annual Conference on Neural Information Processing Systems, 2025.

\bibitem{ye2025pdeformer}
{\sc Z.~Ye, Z.~Liu, B.~Wu, et~al.}, {\em Pdeformer-2: A versatile foundation model for two-dimensional partial differential equations}, arXiv preprint arXiv:2507.15409,  (2025).

\bibitem{yuan2025pirf}
{\sc M.~Yuan, P.~Jin, N.~Li, and Q.~Li}, {\em {PIRF}: Physics-informed reward fine-tuning for diffusion models}, in NeurIPS 2025 AI for Science Workshop, 2025.

\bibitem{zhang2018generalized}
{\sc Z.~Zhang and M.~Sabuncu}, {\em Generalized cross entropy loss for training deep neural networks with noisy labels}, Advances in neural information processing systems, 31 (2018).

\bibitem{zhang2020distilling}
{\sc Z.~Zhang, H.~Zhang, S.~O. Arik, H.~Lee, and T.~Pfister}, {\em Distilling effective supervision from severe label noise}, in Proceedings of the IEEE/CVF Conference on Computer Vision and Pattern Recognition, 2020, pp.~9294--9303.

\bibitem{zhou2025text2pde}
{\sc A.~Zhou, Z.~Li, M.~Schneier, J.~R.~B. Jr, and A.~B. Farimani}, {\em Text2{PDE}: Latent diffusion models for accessible physics simulation}, in The Thirteenth International Conference on Learning Representations, 2025.

\bibitem{zhou2024data}
{\sc W.~Zhou and Y.~Xu}, {\em Data-guided physics-informed neural networks for solving inverse problems in partial differential equations}, arXiv preprint arXiv:2407.10836,  (2024).

\end{thebibliography}

\end{document}